\theoremstyle{plain}
\newtheorem{theorem}{Theorem}
\newtheorem{lemma}[theorem]{Lemma}
\newtheorem{proposition}[theorem]{Proposition}
\newtheorem{prob}{Problem}
\newcommand{\Z}{\mathbb{Z}}
\newcommand{\Q}{\mathbb{Q}}
\newcommand{\C}{\mathbb{C}}
\newcommand{\PP}{\mathbb{P}}
\newcommand{\kk}{\mathbf{k}}
\newcommand{\SSS}{\mathcal{S}}
\newcommand{\rk}{\mbox{\upshape rk}\,}
\begin{document}

\title{Quotients of Spheres By Linear Actions of Tori}
\author{Marisa J. Hughes\thanks{Research partially supported by NSF grant DMS-0900912} and Ed Swartz\thanks{Research partially supported by NSF grant DMS-0900912}\\
Department of Mathematics,\\
Cornell University, Ithaca NY, 14853-4201, USA,\\
mjbelk@math.cornell.edu}

\maketitle

\begin{abstract}

\indent We consider  quotients of spheres by linear actions of real tori. To each quotient we  associate a matroid  built out of a diagonalization of the torus action.  We find the integral homology groups of the resulting quotient spaces  in terms of the Tutte polynomial of the  matroid.  We also find the homotopy type and homology of the singular space of such an action.  Lastly, we consider the circumstances under which the orbit space is a manifold or, more specifically, a (homology) sphere.
\end{abstract}

\section{Introduction}

Let $G$ be a compact group that acts by isometries on a Riemannian manifold $Y$.  It is natural to ask whether the orbit space of this action is itself a topological manifold.  In order to answer this question, we can consider the behavior of the action on the tangent space.  Let $x$ be a point of $Y$ with tangent space $T_x Y$.  Denote by $S_x$ the unit tangent vectors in $T_x M$ which are perpendicular to the orbit $Gx$.  The isotropy group of $x$, $G_x = \{g \in G: gx=x\},$  acts on $S_x$.  In order for the overall orbit space $Y/G$ to be a manifold, the quotient $S_x/G_x$ must  at least be a homology sphere for each point $x \in Y$.  Thus, understanding the topology of quotients $S^n/G$ where $G \subseteq O(n)$ is essential for answering questions about the general orbit space $Y/G$.

 The geometry and topology of quotients of spheres by finite tori (i.e. groups of the form $(\mathbb{Z}_p)^r, p$ a prime) was investigated in \cite{EdThesis}.  Here we use similar techniques to analyze the quotients of spheres by linear actions of real tori $X = S^{2n-1}/T^r.$  In order to compute $H_\ast(X)$ we rewrite the action as a matrix that describes how each circle in the product $T^r$ acts on the invariant circles of $S^{2n-1}.$  The matroid represented by this matrix determines the Poincar\'e polynomial of the quotient via  the Tutte Polynomial.

We then move on to study the singular set of the quotient as an arrangement.  The lattice of the singular set arrangement is shown to correspond to the lattice of flats of the matroid described above.  The homotopy type of the singular set can then be computed using  techniques  described in \cite{ZZTop} and our previous results.  This allows a formulation of the Poincar\'e polynomial of the singular set in terms of the Tutte polynomial.

Having a formula for the Poincar\'e polynomial of $S^{2n-1}/T^r$ in terms of the Tutte polynomial gives us the tools necessary to determine when these orbit spaces are manifolds.   We classify all the actions of $T^r$ whose orbit space is a manifold, and, even more specifically, when  the orbit space is a (homology) sphere.
\section{Background and Notation}

\subsection{Matroids}

For a more thorough introduction to the theory of matroids, and proofs of the facts given below, see \cite{Oxley}.

\indent A \emph{matroid} is a pair $(E,I)$ where $E$ is a finite set, $\mathcal{P}(E)$ the power set of $E,$ and $\mathcal{I} \subseteq \mathcal{P}(E)$. The finite set $E$ is known as the {\em ground set}, and $\mathcal{I}$ is the set of {\em independent} subsets of $E$.  In order to be a matroid, the independent sets must respect the following axioms:

\noindent I1) $\emptyset \in \mathcal{I}$\\
I2) If $I_1 \in \mathcal{I}$ and $I_2 \subseteq I_1$, then $I_2 \in \mathcal{I}.$
\\
I3) If $I_1, I_2, \in \mathcal{I}$ and $|I_1|< |I_2|$, then $\exists   x\in I_2 \backslash I_1$ such that $I_1 \cup \{x\} \in \mathcal{I}$

An element $e\in E$ is called a \emph{loop} if it is contained in no independent sets.  We say $e \in E$ is a \emph{coloop} if it is contained in every maximal independent set of $M$.  In this paper, we will only be concerned with \textit{representable} matroids.  These matroids can be represented by a matrix; the ground set is the set of column vectors of a matrix, and the independent sets are precisely the sets of columns which are linearly independent as column vectors.  All of our matrices will have $\mathbb{Z}$ coefficients.  Linear independence of columns will always be over the rationals.     Row operations, which preserve the linear independence relations of the columns,   and column switches of a matrix do not change the isomorphism class of the matroid.

The \emph{deletion} of a matroid element, denoted $M-e$, has ground set $E \setminus e$ and independent sets $\mathcal{I}_{M-e} = \{I \in \mathcal{I}_M : e \not \in I\}$.  Deleting $e$ from a representable matroid can be accomplished by deleting the column corresponding to $e$ in a representative matrix.

 Another matroid construction, denoted by $M/e$,  is  the {\em contraction} of $M$ by $e$.  If $e$ is a loop of $M,$ then $M/e$ is the same as $M-e.$  Otherwise, $M/e$ has ground set $E\setminus e$ and independent sets $\mathcal{I}_{M/e} = \{ I \setminus e \:  : \: \{e\} \cup I \in \mathcal{I}_M \}$.  In the case of a representable matroid, the contraction by $e$ can be computed by row reducing a representative matrix $ A$ such that the column corresponding to $e$ has only one nonzero entry.  By deleting the row where this entry is located, along with the column corresponding to $e$, we get a new matrix that represents the contraction $M/e$.  For a subset $A$ of $E$ the contraction $M/A$ is obtained by contracting each element of $A$ one at a time.  It is not hard to show that $M/A$ is independent of the order in which the contractions are performed.  

A matroid is a \emph{direct sum} of matroids, $M = M_1 \oplus M_2,$ if the ground set of $M$ is the disjoint union of the ground sets $E_1$ and $E_2$, and a set $A$ is independent in $M$ if and only if $A \cap E_1 \in \mathcal{I}_1$ and $A\cap E_2 \in \mathcal{I}_2$.  Note that for any matroid $M$ with loop or coloop $e$, $M$ can be decomposed at $(M-e) \oplus e$.

Every matroid has a rank function $r: E \rightarrow \mathbb{N}_0$ that maps a set to the cardinality of its maximally independent subsets.  A \emph{flat}, or closed set, of a matroid is a subset $F \subseteq E$ such that $ \forall e\in E-F ,\: r(F) = r(F \cup e) -1$.    A {\em hyperplane} $H$ of a matroid is a flat of $M$ such that $r(H) = r(E)-1$. We will frequently use $r(M)$, or just $r,$  for  $r(E).$

\subsection{Lattice of flats}

 The flats of $M$ form a \emph{lattice} under inclusion which we will denote $L_M$.  A lattice is a partially ordered set in which each pair of elements has a unique least upper bound and greatest lower bound.  The lattice of flats of any matroid is \emph{coatomic}, i.e. any flat of $M$ can be realized as an intersection of hyperplanes.  If $F$ is a flat of $M$, then the interval $[F,E]=\{ F' \in L_M: F \subseteq F' \subseteq E\}$ is isomorphic as a poset to $L_{M/F}.$  

For any finite poset $P$ the {\em order complex} of $P,$ denoted $\Delta(P),$ is the simplicial complex whose vertices are the elements of $P$ and whose faces are chains in $P.$  Let $\widetilde{L_M}$ be $L_M$ with its least element, the subset of all loops, and greatest element, $E,$ removed.  The homotopy type of $\Delta(\widetilde{L_M})$ plays a key role in Section \ref{singset}.   

\begin{theorem} \cite{Bjorner} \label{ordercomplex}
The order complex $\Delta(\widetilde{L_M})$ is homotopy equivalent to a wedge of $\mu(M)$ spheres all of which have dimension $r(M)-2.$\end{theorem}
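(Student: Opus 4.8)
The plan is to prove the statement in two stages: first show that $\Delta(\widetilde{L_M})$ is homotopy equivalent to a wedge of spheres of the correct dimension, and then count the spheres via the M\"obius function, where I take $\mu(M)$ to denote the (unsigned) M\"obius invariant $|\mu_{L_M}(\hat 0,\hat 1)|$. Recall that $L_M$ is a \emph{geometric lattice}: it is atomic and its rank function is semimodular. Since $L_M$ is graded of rank $r = r(M)$ with $\hat 0$ the flat of loops and $\hat 1 = E$, the proper part $\widetilde{L_M}$ is graded with elements in ranks $1,\dots,r-1$. Hence every maximal chain of $\widetilde{L_M}$ has exactly $r-1$ elements, so $\Delta(\widetilde{L_M})$ is a \emph{pure} simplicial complex of dimension $r-2$. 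This already pins down the dimension asserted in the theorem; the content lies in the wedge-of-spheres conclusion and the count.

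For the homotopy type, I would establish that $L_M$ admits an \emph{EL-labeling} and then invoke B\"orner's lexicographic shellability machinery. Fix a linear order $a_1 < \dots < a_k$ on the atoms of $L_M$. To a covering relation $x \lessdot y$ assign the label $\lambda(x,y) = \min\{\, i : x \vee a_i = y \,\}$. The key verification is that within every interval $[x,y]$ of $L_M$ there is a \emph{unique} maximal chain whose label sequence is strictly increasing, and that this chain is lexicographically least among all maximal chains of $[x,y]$. This is where the matroid structure enters: semimodularity and the exchange property of the atoms guarantee both the existence and the uniqueness of the increasing chain. Once the EL-labeling is in place, the general theory shows that $\Delta(\widetilde{L_M})$ is shellable and therefore homotopy equivalent to a wedge of spheres, all of top dimension $r-2$ because the complex is pure.

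It remains to count the spheres. In an EL-shellable poset the number of top-dimensional spheres equals the number of maximal chains of $L_M$ from $\hat 0$ to $\hat 1$ whose label sequence is strictly \emph{decreasing} (the falling chains). On the other hand, the same EL-labeling computes the M\"obius function: $\mu_{L_M}(\hat 0,\hat 1)$ is the signed count of falling chains, and in a geometric lattice of rank $r$ this sign is uniformly $(-1)^r$. Therefore the number of spheres is exactly $(-1)^r \mu_{L_M}(\hat 0,\hat 1) = |\mu_{L_M}(\hat 0,\hat 1)| = \mu(M)$. Equivalently, one can bypass the sign bookkeeping using Philip Hall's theorem, $\widetilde\chi\bigl(\Delta(\widetilde{L_M})\bigr) = \mu_{L_M}(\hat 0,\hat 1)$, and match this reduced Euler characteristic against that of a wedge of $(r-2)$-spheres.

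The main obstacle is the verification of the EL-property, namely proving that each interval has a unique strictly increasing maximal chain under the atom-minimizing labeling. Everything else (purity, the wedge-of-spheres conclusion, and the identification of the count with $\mu$) follows formally from the general theory once shellability is established. Semimodularity is precisely the hypothesis that makes the atom-minimizing labeling work, so the crux is to translate the matroid exchange axiom into the combinatorial statement about increasing chains.
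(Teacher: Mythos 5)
The paper offers no proof of this statement to compare against: it is quoted as background, with the citation \cite{Bjorner} standing in for the argument. Your proposal is, in outline, exactly the proof in that source, namely Bj\"orner's lexicographic shellability of geometric lattices via the minimal-atom labeling $\lambda(x \lessdot y)=\min\{i : x \vee a_i = y\}$, followed by the standard consequences: purity of $\Delta(\widetilde{L_M})$ pins the dimension at $r-2$, shellability gives the wedge of top-dimensional spheres, and the count is $|\mu_{L_M}(\hat 0,\hat 1)|$ either by falling chains or, more cleanly as you note, by Philip Hall's theorem once the homotopy type is known. Two remarks on precision. First, in the general EL-theory the falling chains are those with \emph{weakly} decreasing label sequences; your ``strictly decreasing'' is nonetheless correct for this particular labeling, because consecutive covers can never carry the same label (if $\lambda(x_{i-1},x_i)=j$ then $a_j \le x_i$, so $x_i \vee a_j = x_i \neq x_{i+1}$), though a careful write-up should say why. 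Second, your plan correctly isolates the crux --- existence, uniqueness, and lexicographic minimality of the increasing maximal chain in every interval $[x,y]$ --- but leaves it as an assertion; that verification (a greedy-chain argument resting on semimodularity, via the fact that $x \lessdot x \vee a$ for any atom $a \not\le x$) is the actual mathematical content of Bj\"orner's theorem, so a self-contained proof must include it. Finally, note that the paper's $\mu(M)$ is defined with sign, so the theorem implicitly means $|\mu(M)| = (-1)^{r(M)}\mu_{L_M}(\hat 0, \hat 1)$ spheres (Rota's sign-alternation in geometric lattices), which matches your unsigned convention.
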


\subsection{Tutte Invariants} \label{tutte invariants}

\indent The \textit{Tutte Polynomial}, written $T(M;x,y)$, is a matroid invariant that behaves well with respect to deletion and contraction.  It is defined as the unique two-variable polynomial satisfying the following recursion:\\
1) $T($a single coloop$; x, y) = x; \:\: T($a single loop$; x,y) = y $\\
2) If $e$ is a loop or a coloop, then $T(M; x,y) = T(e; x,y) T(M/e; x,y)$\\
3) If $e$ is neither a loop nor a coloop, then $T(M; x,y) = T(M-e; x,y) + T(M/e; x,y)$

\noindent It is sometimes preferrable to replace (2) with the following:  If $M = M_1\oplus M_2$, then $T(M; x,y) = T(M_1; x,y) T(M_2; x,y)$.  This definition is equivalent.

The Tutte polynomial is well-defined and unique for any matroid.   See \cite{TutteP} for a proof and many more applications of this polynomial.

The M\"{o}bius function of a finite poset is the function $\mu : L \times L \rightarrow \mathbb{Z}$ that satisfies:\\
$\forall x,y,z \in L,  \displaystyle \sum_{x \leq y \leq z} \mu(x, z) = \delta(x,z)$ and $\mu(x,z) = 0$ if $x \not\leq z$.  As usual, $\delta$ denotes  Kronecker's Delta.

For the existence and uniqueness of $\mu$, see \cite{ZMobius}.
The M\"{o}bius function of a matroid is defined as  $\mu(M) = \mu_{L_M}(\bar{\emptyset} , E)$ where $\mu_{L_M}$ is the standard M\"{o}bius function on the lattice of flats and $\bar{\emptyset}$ is the least element in the lattice of flats (which contains all loops of the matroid). When $M$ has no loops the M\"{o}bius function of  $M$ is related to the Tutte polynomial via the equation $|\mu(M)| = T(M; 1,0)$ \cite{TutteP}.

\section{The Matroid Associated to the Action}

\indent Denote the $n$-torus by $T^r = T^1_1 \times T^1_2 \times \cdots T^1_r$.  We will also use the decomposition of an odd-dimensional sphere into circles: $S^{2n-1} = S^1_1 * S^1_2 * \cdots * S^1_n$, where * denotes the topological join of spaces.  In the interests of notational brevity, we will leave out the repeated superscript ``1" when referring to the circles in either decomposition. Given any linear action of $T^r$ on an even-dimensional sphere there is a pair of antipodal points which are fixed by the action.  Hence the quotient space is the suspension of a  linear action of $T^r$ on an odd-dimensional sphere.  As all of our questions of interest are easily answered for suspensions, we will henceforth assume that the sphere is odd-dimensional. 
   
We wish to study an effective linear actions $T^r \curvearrowright S^{2n-1}$ and the resulting quotient space $X = S^{2n-1}/T^r$. We  associate to each such action an $r \times n$ matrix $Z=(z_{ij})$ as follows:  Since $T^r$ consists of commuting $n \times n$ orthogonal matrices   we can simultaneously diagonalize all of the elements of $T^r$ over the complexes with diagonal entries in the unit circle.  Equivalently, $T^r$ is conjugate in $O(n)$  to a torus such that each $e^{\sqrt{-1}~\theta} \in T_i$ acts on $S_j$ by $e^{\sqrt{-1}~\theta} \cdot e ^{\sqrt{-1}~\beta} = e^{\sqrt{-1} (z_{ij} \theta + \beta)}, \ z_{ij} \in \mathbb{Z}. $   As conjugate tori give isometric quotient spaces, we will assume that $T^r$ is presented in this form.  

\begin{lemma} \label{matrix operations}  Performing any combination of the following $\mathbb{Z}$-matrix operations on $Z$ does not affect the isometry type of the corresponding quotient space.

\begin{enumerate}
\item \label{rowshuffle} Reordering the rows of $Z$ 
\item \label{columnshuffle} Reordering the columns of $Z$
\item \label{pm row} Multiplying any row by $\pm 1$ 
\item  \label{pm column} Multiplying a column by $\pm 1.$
\item  \label{row operation} Adding a multiple of one row to another

\end{enumerate}
\end{lemma}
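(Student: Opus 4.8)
The plan is to realize each operation either as a reparametrization of the acting torus $T^r$ (the row operations) or as an isometry of the sphere $S^{2n-1}$ that normalizes the action (the column operations). Throughout I write the action as a homomorphism $\rho \colon T^r \to O(2n)$ and record it on the angle coordinates of the join decomposition: a point of $S^{2n-1}$ has coordinates $(\beta_1,\dots,\beta_n)$, defined wherever the corresponding radius is nonzero, and $\theta=(\theta_1,\dots,\theta_r)$ acts by $\beta_j \mapsto \beta_j + \sum_i z_{ij}\theta_i$, i.e.\ $\beta \mapsto \beta + Z^{T}\theta$, where $Z^T$ has $(j,i)$ entry $z_{ij}$.

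First I would dispose of the row operations (\ref{rowshuffle}), (\ref{pm row}), and (\ref{row operation}) simultaneously. Any integer matrix $A \in GL_r(\Z)$ induces an automorphism $\alpha_A \colon T^r \to T^r$, $\theta \mapsto A\theta$, and precomposing gives a new action $\rho \circ \alpha_A$ whose matrix is $A^{T}Z$, since $\beta \mapsto \beta + Z^{T}(A\theta) = \beta + (A^{T}Z)^{T}\theta$. Reordering rows, negating a row, and adding a $\Z$-multiple of one row to another correspond to left multiplication of $Z$ by a permutation matrix, a diagonal $\pm1$ matrix, and a shear $I + k\,e_i e_{i'}^{T}$ respectively; each such left-multiplier is $A^{T}$ for a matrix $A=E^{T}\in GL_r(\Z)$, because permutations, diagonal sign matrices, and shears all have determinant $\pm1$ and integer inverses. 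Since $\alpha_A$ is an automorphism of $T^r$, the image subgroup is unchanged, $\rho(\alpha_A(T^r)) = \rho(T^r)$, so the quotient space is not merely isometric but literally identical.

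Next I would handle the column operations (\ref{columnshuffle}) and (\ref{pm column}) as conjugations by isometries of $S^{2n-1}$. Permuting the join factors $S_1,\dots,S_n$ is realized by the orthogonal map $P_\sigma$ of $\C^n = \R^{2n}$ that permutes the complex coordinates; conjugating the diagonal operator $\rho(\theta)$ by $P_\sigma$ permutes its diagonal entries, hence permutes the columns of $Z$. Negating column $j$ is realized by the reflection $c_j$ applying complex conjugation to the $j$-th coordinate, so that $\beta_j \mapsto -\beta_j$: a direct computation of $c_j\,\rho(\theta)\,c_j^{-1}$ on the $j$-th angle gives $\beta_j \mapsto \beta_j - \sum_i z_{ij}\theta_i$, with all other coordinates untouched, which is exactly column $j$ negated. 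In each case $\Phi$ is an isometry of $S^{2n-1}$ with $\Phi\,\rho(T^r)\,\Phi^{-1} = \rho'(T^r)$, so $\Phi$ carries $\rho(T^r)$-orbits to $\rho'(T^r)$-orbits and descends to an isometry $S^{2n-1}/\rho(T^r) \to S^{2n-1}/\rho'(T^r)$.

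The conceptual content is light; the work is almost entirely bookkeeping, and the main place to be careful is keeping the transpose and left-versus-right conventions consistent so that each listed operation is matched with the correct matrix $A^{T}$ or conjugating isometry. The one genuine point to verify is what makes the row operations so clean, namely that each corresponding $A$ lies in $GL_r(\Z)$; this is precisely what guarantees that $\alpha_A$ is an automorphism, and hence that the image $\rho(T^r)\subseteq O(2n)$ is preserved exactly rather than merely up to isometry.
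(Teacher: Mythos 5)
Your proof is correct and takes essentially the same approach as the paper: row operations are precompositions with automorphisms of $T^r$ (the paper treats reordering and sign changes as basis/orientation choices and operation \ref{row operation} as precomposition with the elementary-matrix automorphism), and column operations are conjugations by isometries of $S^{2n-1}$ coming from relabeling or reorienting the join circles. Your version is merely a more unified and explicit write-up, with the nice extra observation that the row operations leave the image subgroup $\rho(T^r)\subseteq O(2n)$ literally unchanged.
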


\begin{proof} \hspace{1.5in}
          \\
\ref{rowshuffle}.) Switching two rows is equivalent to changing the order of the circles in the chosen basis $T^r = T_1 \times \cdots \times T_r$.\\
\ref{columnshuffle}.) Column switching is equivalent to  changing the order of the circles chosen in the join $S^{2n-1} = S_1* \cdots * S_n$.\\ 
\ref{pm row}.)  This corresponds to the choice of a preferred orientation for the circles of the torus.\\
\ref{pm column}.) This corresponds to the choice of a preferred orientation for the circles of the sphere.\\
\ref{row operation}.) Let $Z_i$ and $Z_j$ be rows of the matrix. If we replace $Z_j$ with $Z_j + cZ_i$, then the action $T^r \curvearrowright S^{2n-1}$ corresponding to the new matrix will be the action obtained by precomposing the original action with the group isomorphism $\phi: T^r \to T^r$ determined by the elementary matrix which is diagonal except for the $ji$ entry which is $c.$ \end{proof}

We  note that if $c \in \Z$ divides an entire row, say row $i,$  then the action is not effective as it has a kernel of the $c$-th roots of unity of $T_i.$   However, the quotient is isometric to the orbit space  of  $T^r/\Z_c\cong T^r$, where $\Z_c$ acts trivially except on $T_i$ .  We therefore allow division of an entire row in the matrix by $c$, provided that $c$ divides all of its entries.

As observed previously, there is a natural matroid associated to $Z$  which we denote by $M_Z.$  The ground set of $M_Z$ is the columns of $Z,$ and the independent subsets of $M_Z$ are the linearly independent subsets of columns.  An equivalent method for determining $M_Z$ is via representation theory.  The {\em real} irreducible representations of $S^1$ are isomorphic to $\Z/\pm1.$ So we can write the representation $\rho: T^r \to O(n)$ given by the action as a direct sum $\rho = \rho_1 \oplus \dots \oplus \rho_n$ where each $\rho_i \in (\Z)^r/ \pm 1.$ This means that $\{\rho_1, \dots, \rho_n\}$ has a matroid structure given by viewing each $\rho_i$ as a vector in $ \Q^r$ determined up to sign. It is not hard to see that this matroid is $M_Z.$  Thus $M_Z$ only depends on the action $T^r \curvearrowright S^{n-1},$ not on the chosen  diagonalization.    In fact, we will write $M_X$ for this matroid.  While it is not immediately obvious, it is possible to reconstruct $M_X$ from the isometry type of $X.$  As we have no use for this fact we do not prove it here.    
 While we prefer to use matroid notation for its simplicity, it is important to keep in mind that our matroids have  matrix representations  derived from the action.  Furthermore, when we refer to $M_X-e_j$ or $M_X/e_j$, we assume that there is a preferred class of representative matrices for these matroids.  In particular we will use $X_M$ to refer to  a quotient space even though the matroid (without a particular representation) does not determine the quotient space up to isometry. For instance, if $Z_1 = \begin{bmatrix} 2 & 3 \end{bmatrix}$ and $Z_2 = \begin{bmatrix} 1 & 1 \end{bmatrix},$ then the corresponding quotient spaces are non isometric two-spheres.  For an example where one quotient space is $\C P^n$ and the other is not even a manifold, see Section \ref{manifolds}.

\section{$H_\ast(X)$} \label{integral homology}

\subsection{$X$ as a mapping cone}
\indent Let $M_X$ be a matroid corresponding to a quotient space $X$.  Not surprisingly, it is possible to extract a variety of geometric and/or topological data from $X$ through the matroid structure of $M_X.$  

\begin{proposition} \label{loop} Let $X = S^{2n-1}/T^r$ and let $M_X$ be the corresponding matroid.  If $M_X$ contains a loop $e_j$, then $X = S_j * X_{M-e_j}.$ \end{proposition}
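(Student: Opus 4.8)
The plan is to realize $X$ directly as a spherical join, exploiting the join decomposition of the sphere together with the fact that a loop corresponds to a fixed circle. First I would write $S^{2n-1} = S_j * S'$, where $S' = S_1 * \cdots * \widehat{S_j} * \cdots * S_n$ is the join of the remaining circles. Concretely, a point of $S^{2n-1}$ can be parametrized as $(\cos\phi)\,u + (\sin\phi)\,v$ with $u \in S_j$, $v \in S'$, and $\phi \in [0,\pi/2]$, subject to the usual join identifications that collapse the $v$-coordinate when $\phi = 0$ and the $u$-coordinate when $\phi = \pi/2$.

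Since $e_j$ is a loop, the column of $Z$ corresponding to $S_j$ is the zero vector, so every $z_{ij} = 0$ and $T^r$ fixes $S_j$ pointwise. Writing the action in the above coordinates, an element $g \in T^r$ sends $(\cos\phi)\,u + (\sin\phi)\,v$ to $(\cos\phi)\,u + (\sin\phi)\,(g\cdot v)$; that is, $T^r$ acts trivially on the $S_j$ factor and acts on $S'$ exactly by the action whose matrix is $Z$ with column $j$ deleted. By definition this latter action is the one corresponding to $M - e_j$, so its quotient is $X_{M-e_j} = S'/T^r$.

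The key step is then the general principle that forming a quotient commutes with joining when the group acts trivially on one factor: if $G$ acts on $A * B$ with $A$ fixed pointwise, then $(A * B)/G \cong A * (B/G)$. I would prove this by passing to the quotient on the $B$-factor and checking that the join identifications are $G$-equivariant and descend correctly. The collapse at $\phi = 0$ is already $G$-invariant because all of $B$ is crushed there, while the collapse at $\phi = \pi/2$ is preserved since $g$ fixes $A$. Applying this with $A = S_j$, $B = S'$, and $G = T^r$ yields $X = S_j * X_{M-e_j}$.

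The step requiring the most care is verifying the identification $(A*B)/G \cong A*(B/G)$ at the two endpoints of the join parameter, since that is precisely where the join coordinates fail to be honest products; away from the endpoints the claim is immediate from the product form of the action. I expect this to be routine rather than genuinely difficult, and I would note that because $g$ acts through the $(\sin\phi)\,v$ term by an isometry of $S'$ while fixing the $\phi$- and $u$-coordinates, the resulting homeomorphism in fact respects the spherical join metric, so the conclusion holds at the level of isometry and not merely homeomorphism.
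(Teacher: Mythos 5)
Your proposal is correct and follows essentially the same route as the paper: a loop forces the $j$-th column of $Z$ to vanish, so $T^r$ fixes $S_j$ pointwise and acts only on the complementary join factor, whence the quotient is the join $S_j * X_{M-e_j}$. The paper's proof simply asserts this last step, while you spell out the verification that quotienting commutes with the join when the group fixes one factor (including the isometry refinement the paper notes after the proposition statement), which is a fuller but not different argument.
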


\noindent Note that $X = S_j * X_{M-e_j}$ means that $X$ is {\em isometric} to the given (spherical) join.  

\begin{proof}  If $e_j$ is a loop, then the $j^{th}$ column of any matrix representation of $M_X$ is the zero vector.  This implies that $T_i$ fixes $S_j$ for all $i$.  Since $S_j$ is fixed by the action of $T^r$, $X_M = S_j * X_{M-e_j}.$ \end{proof}

Now let us consider the situation when $e_j$ is not a loop.  
 Let $x$ be a point of $S_{j}$.  We will denote the stabilizer of $x$ in $T^r$ by $T^r_x$. We can decompose the quotient map on the sphere induced by the action of $T^r$ into two parts: $f: S^{2n-1} \twoheadrightarrow S^{2n-1}/T^r_x$ and $g: S^{2n-1}/T^r_x \twoheadrightarrow S^{2n-1} / T^r$.  Evidently $g$ is just the quotient map for the action of $T^r/T^r_x$ on $f(S^{2n-1})$.  Then $ g \circ f$ is the projection from $S^{2n-1}$ to $X$.  The entire circle $S_j$ is fixed by $f$, and  $g$ identifies all of $S_{j}$ to a single point $\bar{x}.$  Define $R_x$ to be the quotient of the action of $T^r$ restricted to the $(2n-3)$-dimensional sphere $(S_1 * \cdots * \hat{S_{j}} * \cdots * S_n)$.  As $T^r$ respects the join decomposition of $S^{2n-1}$ every point $\bar{y} \neq \bar{x}$ in $X$, but not in $R_x,$ lies on a unique minimal geodesic from $\bar{x}$ to $R_x.$  The minimal geodesics in $X$ with initial value $\bar{x}$ are parameterized by  $(S_1 * \cdots * \hat{S_{j}} * \cdots * S_n)/T^r_x.$  This quotient space is usually called the space of directions of $X$ at $x$ and we denote it by $N_x.$ All of the minimal geodesics from $\bar{x}$ to $R_x$ have length $\pi /2.$ The above discussion shows that $X$ is (homeomorphic to) the mapping cone of $g: N_x \to R_x$  with cone point $\bar{x}.$  As with any mapping cone, there is an associated Mayer-Vietoris sequence.
 
  \begin{equation} \label{les mv} \dots \to \tilde{H}_i (R_x) \to \tilde{H}_i (X) \stackrel{\partial}{\to} \tilde{H}_{i-1} (N_x) \to \dots .
 \end{equation}

 \begin{proposition} \label{coloop=cone} Let $X = S^{2n-1}/T^r$ and let $M_X$ be the corresponding matroid.  If $M_X$ contains a coloop, then $X$ is a cone. \end{proposition}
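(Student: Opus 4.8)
The plan is to use the allowed matrix operations of Lemma \ref{matrix operations} (together with the row-division convention permitted for effective actions) to bring a representative matrix of $M_X$ into a block form in which the coloop occupies its own $1\times 1$ block, and then to read the cone structure directly off the resulting join decomposition of the action. The guiding idea is that a coloop should mean that some circle $S_j$ is rotated by a single torus circle acting trivially on everything else, so that $S_j$ collapses to a point and its suspension direction becomes the cone direction.

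Concretely, suppose $e_j$ is a coloop. First I would use integer row operations to reduce the $j$th column of $Z$ to a single nonzero entry, say in row $i_0$; this is exactly the reduction used to compute $M_X/e_j$, and by operation \ref{row operation} of Lemma \ref{matrix operations} it does not change the quotient. I would then argue that, after clearing, the matrix can be taken to be block diagonal,
\[
Z = \begin{pmatrix} 1 & 0 \\ 0 & W \end{pmatrix},
\]
with the top-left $1$ in position $(i_0,j)$, so that $T_{i_0}$ acts on $S_j$ by the standard rotation and acts trivially on every other $S_k$, while the complementary torus $T^{r-1}$ spanned by the other rows acts only on $S_1*\cdots*\hat{S_j}*\cdots*S_n$ through $W$. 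Since the two factors of $T^r = T_{i_0}\times T^{r-1}$ act on complementary join factors, the quotient splits as a join of quotients, $X = (S_j/T_{i_0}) * \big((S_1*\cdots*\hat{S_j}*\cdots*S_n)/T^{r-1}\big)$. The first factor $S_j/T_{i_0}$ is a single point because $T_{i_0}$ rotates $S_j$ transitively, and the join of a point with any space $Y$ is the cone on $Y$; hence $X = C(X_{M-e_j})$ is a cone. Equivalently, in the language of the mapping cone set up above, the block form shows that $T^r$ and the stabilizer $T^r_x$ have the same orbits on the deleted join, so $g\colon N_x \to R_x$ is a homeomorphism and the mapping cone of $g$ is a cone.

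The main obstacle is the clearing step, that is, justifying the block form over $\Z$ rather than merely over $\Q$. The coloop hypothesis only gives the rational statement that the off-pivot part of row $i_0$ lies in the $\Q$-row span of the remaining rows (equivalently, that $e_j$ is not in the rational span of the other columns), and a priori clearing it by integer row operations could be impossible. This is precisely where effectiveness enters. Because $e_j$ is a coloop, every basis of $M_X$ contains it, so every nonvanishing maximal minor of $Z$ is divisible by the pivot entry and factors through a maximal minor of $W$; effectiveness forces the gcd of the maximal minors of $Z$ to be $1$, which simultaneously pins the pivot to $\pm 1$ and shows that the integer row lattice of $W$ is saturated in $\Z^{n-1}$ (its invariant factors are all $1$ by Smith normal form). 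Saturation upgrades the rational membership coming from the coloop to integral membership, so row $i_0$ can indeed be cleared by integer operations and the block form is attained. I expect verifying this integrality, rather than the subsequent join-to-cone bookkeeping, to be the crux of the argument.
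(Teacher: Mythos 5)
Your proof is correct and takes essentially the same approach as the paper: row-reduce $Z$ so the coloop column becomes a $\pm 1$ pivot whose row is otherwise zero, then conclude that the corresponding circle of $T^r$ collapses $S_j$ to a point while acting trivially on the complementary join, so $X$ is a cone (equivalently, the mapping-cone map $g\colon N_x\to R_x$ is the identity). The paper asserts this block form in one sentence via the Euclidean algorithm and effectiveness, whereas you carefully justify the integral row-clearing (pivot $=\pm 1$ and saturation of the row lattice of $W$ from the gcd-of-maximal-minors criterion for effectiveness), which fills in precisely the step the paper leaves implicit.
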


 \begin{proof} Let $e_j$ be a coloop of $M$.  Then we may row reduce the representative matrix of $M$ using the Euclidean algorithm so that $e_j$ is the $j$th column, this column contains only one nonzero entry, and that entry is in position $ij$.  In addition, the $i$-th row is zero except for $ij.$ Since the action is effective this entry must be plus or minus one.  With the matrix in this form, it is clear that $T^r_x = T^r$ for any $x \in S_j.$  Hence for this $x$ the map which determines the mapping cone structure of $X$ is the identity. \end{proof}
 
 The above results already make it easy to compute $\pi_1(X).$  If $n=1,$ then $X$ is homeomorphic to a circle or a point.  In all other cases, $X$ is simply connected.
 
 \begin{theorem} \label{simply connected}
 If $n \ge 2,$ then $X$ is simply connected.
 \end{theorem}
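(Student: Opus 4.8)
The plan is to induct on $n$, treating simple connectivity as a statement $P(n)$ about all linear torus actions on $S^{2n-1}$, and to split into the three cases according to whether the matroid $M_X$ has a loop, has a coloop, or has neither. The first two cases are handled directly by Propositions \ref{loop} and \ref{coloop=cone} and require no induction; the third case is where the mapping cone structure and the inductive hypothesis do the work. The base case $n=2$ folds into the same trichotomy, so the argument can be stated uniformly.

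First, suppose $M_X$ has a loop $e_j$. By Proposition \ref{loop}, $X = S_j * X_{M-e_j}$. Since $S_j \cong S^1 \cong S^0 * S^0$ and the join is associative, $X$ is homeomorphic to the double suspension $\Sigma^2 X_{M-e_j}$, which is simply connected for any nonempty $X_{M-e_j}$. Second, suppose $M_X$ has a coloop. By Proposition \ref{coloop=cone}, $X$ is a cone, hence contractible, so $\pi_1(X)=1$.

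The main work is the case in which $M_X$ has neither a loop nor a coloop. Choosing any column $e_j$ (automatically not a loop), the stabilizer $T^r_x$ of a point $x \in S_j$ is a proper subgroup, and the preceding discussion presents $X$ as the mapping cone of $g : N_x \to R_x$ with $R_x = X_{M-e_j}$. I would apply van Kampen's theorem to the standard open cover of the mapping cone: an open cone neighborhood $A$ of the cone point $\bar x$ (contractible), the complement $B$ of $\bar x$ (which deformation retracts onto $R_x$), and $A \cap B \simeq N_x$, where the inclusion $A \cap B \hookrightarrow B$ induces $g_*$ on fundamental groups. Because these orbit spaces are compact, locally contractible metric spaces with path-connected $N_x$ and $R_x$, van Kampen yields $\pi_1(X) \cong \pi_1(R_x)/\langle\langle \operatorname{im} g_* \rangle\rangle$, so it suffices to show $\pi_1(R_x)=1$. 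Now $R_x = X_{M-e_j}$ is the quotient of $S_1 * \cdots * \widehat{S_j} * \cdots * S_n \cong S^{2n-3}$ by $T^r$. When $n \ge 3$ this is an action on $S^{2(n-1)-1}$ with $n-1 \ge 2$, so $\pi_1(R_x)=1$ by the inductive hypothesis; when $n=2$ the space $R_x$ is the quotient of the one remaining circle, whose column is nonzero since $M_X$ has no loop, hence $T^r$ acts by nontrivial rotations and $R_x$ is a point. In either case $\pi_1(R_x)=1$, and the presentation collapses to $\pi_1(X)=1$.

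The step requiring the most care is the van Kampen computation for the mapping cone, specifically verifying path-connectedness of $N_x$ and $R_x$ and that these quotient spaces are well-behaved enough (locally contractible) for the theorem to apply. An alternative that bypasses the case analysis is to note that the orbit map $S^{2n-1} \to X$ of the connected group $T^r$ is a submetry and so has the path-lifting property; a standard argument then shows $\pi_1(S^{2n-1}) \to \pi_1(X)$ is surjective, and simple connectivity of $S^{2n-1}$ for $n \ge 2$ gives the result at once. I would favor the inductive argument as the main proof, however, since it uses only the results already established above.
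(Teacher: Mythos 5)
Your proposal is correct and follows essentially the same route as the paper: induction on $n$, the loop/coloop/neither trichotomy handled by Propositions \ref{loop} and \ref{coloop=cone}, and Seifert--van Kampen applied to the mapping cone decomposition $X = C(N_x) \cup_{N_x} R_x$ with the inductive hypothesis supplying $\pi_1(R_x)=1$. The only cosmetic difference is that you fold the base case $n=2$ into the van Kampen argument (observing $R_x$ is a point), where the paper instead identifies $X \cong \C P^1$ directly; both are fine.
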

 
 \begin{proof}
 If $e_1 \in M_X$ is a loop or coloop, then Propositions \ref{loop} and \ref{coloop=cone} immediately imply $X$ is simply connected.  So assume that $e_1$ is neither a loop nor a coloop.  For the base case $n=2,$ the only remaining possibility is that $Z = [a_1 \ a_2]$ with both entries nonzero.  This implies $X$ is homeomorphic to $\C P^1$ and hence simply connected (see the proof of Proposition \ref{CPn}).  For the induction step, the mapping cone presentation of $X$ shows that $X$ is the union of two simply connected open subsets whose intersection is connected.  Apply Siefert-van Kampen.   \end{proof}

\subsection{$\tilde{\PP}(X,t)$}
In this section we prove that $H_i(X;\Z)$ is a free abelian group for all $i$ and that the integral reduced Poincar\'e polynomial 
$$\tilde{\PP}(X,t) = \displaystyle\sum \rk \tilde{H}_i(X, \Z) ~t^i$$
\noindent equals $t^{r-1} T(M_X; 0, t^2).$  Our strategy is to use induction on $n$, the  recursion which characterizes the Tutte polynomial, and the long exact sequence (\ref{les mv}).  If $M_X$ contains a coloop, then Proposition \ref{coloop=cone} works well.  However, if $M_X$ does not contain a coloop, then an immediate obstacle to induction is that $N_x$ may not be a quotient of a sphere by a real torus.  

Let $x \in S_j.$ Recall that $N_x \cong S^{2n-3}/T^r_x$, so we wish to better understand the structure of this isotropy subgroup. By Lemma \ref{matrix operations} we can use the Euclidean algorithm to  row reduce a representative matrix of $M_X$ so that there is only one nonzero entry in column $j$, let us say it is in row $i$.  If this $ij^{th}$ entry is a one, then $T^r_x \cong T_1\times \cdots \times \hat{T}_j \times \cdots \times T_r$.  If the entry is some $a\neq 1$, then $a$ is the gcd of column $j$. Hence, $T^r_x \cong T_1\times \cdots \times \hat{T}_j \times \cdots \times T_r \times \ \Z_a$, where $\Z_a$ is the cyclic group $\Z/a\Z.$  This demonstrates that $N_x \cong S^{2n-3}/T^r_x$ where $T^r_x \cong T^{r-1} \times \Z_a$ for some $a \in \mathbb{N}$.  We can break up this action into two parts: let $\hat{N_x} \cong S^{2n-3}/T^{r-1}$ so that $N_x = \hat{N}_x/\Z_a$ and   the matroid corresponding to  $\hat{N_x}$ is $M_X/e_j$. 
\\

We wish to show that this extra quotient by a finite group does not affect the rational homology of $\hat{N}_x$.  In order to do so, we require more information about the local structure of $N_x$ and $\hat{N}_x.$\\

An \emph{absolute neighborhood retract} (ANR) is a topological space $Y$ with the property that for every normal space $Z$ that embeds in $Y$ as a closed subset, there exists an open set $U$ in $Y$ such that $Z \subset U \subset Y$. 

\begin{lemma} $N_x$ and $\hat{N}_x$ are both ANRs.
\end{lemma}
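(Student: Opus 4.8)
The plan is to recognize both spaces as orbit spaces of compact Lie groups acting isometrically on a sphere, and then to invoke the standard criterion for an orbit space to be an ANR. Recall that $\hat{N}_x \cong S^{2n-3}/T^{r-1}$ and $N_x \cong S^{2n-3}/(T^{r-1}\times \Z_a)$, and that in each case the acting group is a compact Lie group acting by isometries---hence smoothly---on the compact smooth manifold $S^{2n-3}$. Consequently both orbit spaces are compact metric spaces, and since the dimension of an orbit space is bounded above by the dimension of the total space, they are finite dimensional. By the classical characterization of finite-dimensional ANRs (a finite-dimensional compact metric space is an ANR if and only if it is locally contractible), it therefore suffices to prove that each orbit space is locally contractible.

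First I would establish local contractibility using the Slice Theorem for smooth actions of compact Lie groups. A neighborhood of any orbit $G\cdot p$ in $S^{2n-3}$ is equivariantly diffeomorphic to a twisted product $G\times_{G_p} D(V)$, where $G_p$ is the compact isotropy group at $p$ and $D(V)$ is the unit disk of a linear slice representation $V$ of $G_p$. Passing to quotients, a neighborhood of the image of $p$ in the orbit space is homeomorphic to $D(V)/G_p$. Because $G_p$ acts orthogonally on $D(V)$, this quotient is the cone on the sphere quotient $S(V)/G_p$, with cone point the image of the orbit $G\cdot p$, and is therefore contractible. Shrinking the slice produces arbitrarily small contractible neighborhoods of that point, so the orbit space is locally contractible.

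Applying this with $G = T^{r-1}$ shows that $\hat{N}_x$ is an ANR. For $N_x$ one may either repeat the argument with $G = T^{r-1}\times \Z_a$, or---more economically---use the decomposition $N_x = \hat{N}_x/\Z_a$ together with the fact that the quotient of a compact ANR by an action of a finite group is again a compact ANR. Either route yields the lemma.

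The main obstacle is the verification of local contractibility, and this is precisely where the Slice Theorem does the essential work: once a neighborhood in the orbit space is identified with the cone $D(V)/G_p$, contractibility is immediate. The remaining hypotheses---compactness, metrizability, and finite dimension---are transparent, and the only point requiring care is to cite the finite-dimensional ANR criterion in the correct form, so that local contractibility (rather than the stronger conditions needed in the infinite-dimensional setting) is enough to conclude.
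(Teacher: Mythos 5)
Your proof is correct, but it takes a genuinely different route from the paper. The paper's proof is a citation-plus-verification argument: it invokes a theorem of Conner stating that the orbit space of a compact abelian Lie group acting on a compact connected finite-dimensional ANR is again an ANR, \emph{provided} the action has finite type (finitely many conjugacy classes of isotropy subgroups), and then the actual work consists of verifying finite type for linear torus actions --- points on an invariant circle $S_j$ share an isotropy group, and a general point's isotropy group is the intersection $\bigcap T^r_{y_{i_k}}$ over the circles whose join contains it, so only finitely many isotropy groups occur. You instead bypass Conner's theorem entirely: you use the smooth Slice Theorem to identify small neighborhoods in the orbit space with cones $D(V)/G_p \cong \mathrm{cone}\bigl(S(V)/G_p\bigr)$, conclude local contractibility, and then apply Borsuk's criterion that a compact, finite-dimensional, locally contractible metric space is an ANR. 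Both arguments are sound; your alternative route for $N_x$ (repeating the slice argument for $T^{r-1}\times \Z_a$, which is a compact Lie group acting linearly on $S^{2n-3}$) is the cleaner of the two options you offer, since it avoids any appeal to a separate finite-quotient-of-ANR theorem. What the paper's approach buys is independence from smoothness --- Conner's theorem applies to actions on arbitrary compact finite-dimensional ANRs once finite type is checked, which fits the paper's inductive setting where quotients of quotients arise. What your approach buys is self-containedness in the linear setting: the slice theorem does all the work, no finite-type bookkeeping is needed, and the argument applies verbatim to any smooth action of a compact (not necessarily abelian) Lie group on a compact manifold.
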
 

\begin{proof}
We say that an action has \emph{finite type} if there are only a finite number of conjugacy classes of isotropy subgroups. It is shown in Conner \cite{Conner} that if $\Gamma$ is a compact abelian Lie group acting on a compact connected finite dimensional ANR $X$, and the action is of finite type, then the orbit space $X/\Gamma$ is an ANR.  This result also applies to all finite abelian groups $\Gamma$.  It is well known that every sphere is an ANR.  It remains to be shown that the linear action of $T^r$ on $S^{2n-1}$ has finite type.  By the definition of the action, all the points $x$ on any given invariant circle $S_j$ have the same isotropy group $T^r_x$.  If $x \in S^{2n-1}$ does not lie on an invariant circle, then there is some minimal subset of circles $\{S_{i_k}\}_{k=1}^m$ whose join in $S^{2n-1}$ contains $x$.        By choosing points  $y_{i_k}\in S_{i_k}$, we see that  $T^r_x = \displaystyle \bigcap T^r_{y_{i_k}}$.  This formulation demonstrates that the toral action can only have a finite number of distinct isotropy groups and is thus of finite type.
\end{proof}

\begin{lemma} \label{Bre}
Suppose a finite abelian group $G$ acts on $\hat{N}_x$.   Let $F$ be a field of characteristic 0 or of characteristic prime to the order of $G$.  Then $H_n(\hat{N}_x/G; F) \cong [H_n(\hat{N}_x; F)]^G$, the group of invariant homology classes.
\end{lemma}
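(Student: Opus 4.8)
The plan is to prove the isomorphism via the \emph{transfer homomorphism} associated to the orbit map $\pi \colon \hat{N}_x \to \hat{N}_x/G$, following the classical treatment of finite group actions on well-behaved spaces. This is precisely why the preceding lemma was needed: the transfer machinery requires the spaces involved to be sufficiently nice (here, ANRs), so that singular homology behaves well and a transfer can be constructed even for a possibly non-free action. Throughout we work with coefficients in $F$ and fix a homological degree; the hypothesis that $\mathrm{char}\,F$ is $0$ or prime to $|G|$ is used only to guarantee that $|G|$ is invertible in $F$.

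First I would record the defining properties of the transfer $\tau \colon H_*(\hat{N}_x/G; F) \to H_*(\hat{N}_x; F)$: the composite $\pi_* \circ \tau$ equals multiplication by $|G|$ on $H_*(\hat{N}_x/G;F)$, the composite $\tau \circ \pi_*$ equals the norm operator $N = \sum_{g \in G} g_*$ on $H_*(\hat{N}_x; F)$, and $g_* \circ \tau = \tau$ for every $g \in G$ since the transfer lands in $G$-symmetric classes. Next, because $\pi \circ g = \pi$ for all $g$, we have $\pi_* \circ g_* = \pi_*$, so $\pi_*$ is constant on $G$-orbits of homology classes; in particular $\pi_* = \pi_* \circ e$, where $e = \tfrac{1}{|G|} N$ is the averaging idempotent whose image is exactly the invariant submodule $[H_*(\hat{N}_x;F)]^G$. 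It therefore suffices to show that the restriction of $\pi_*$ to $[H_*(\hat{N}_x;F)]^G$ is an isomorphism onto $H_*(\hat{N}_x/G;F)$.

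For injectivity, if $\alpha$ is invariant with $\pi_* \alpha = 0$, then $0 = \tau \pi_* \alpha = N\alpha = |G|\,\alpha$, so $\alpha = 0$ as $|G|$ is invertible. For surjectivity, given $\beta \in H_*(\hat{N}_x/G; F)$, the class $\tfrac{1}{|G|}\tau\beta$ is invariant (using $g_* \tau = \tau$) and satisfies $\pi_*\bigl(\tfrac{1}{|G|}\tau\beta\bigr) = \tfrac{1}{|G|}(\pi_*\tau)\beta = \beta$. This establishes the stated isomorphism in each degree. The main obstacle is the construction of the transfer itself: the $G$-action on $\hat{N}_x$ need not be free (indeed $N_x = \hat{N}_x/\Z_a$ typically has fixed points), so one cannot simply invoke the covering-space transfer. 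Handling this non-free case is exactly what forces the ANR hypothesis established previously, which allows one to use Bredon's transfer for finite group actions on ANRs; granting that construction together with the two multiplicative identities above, the remainder of the argument is the purely formal averaging computation just carried out.
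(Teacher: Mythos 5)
Your argument is correct and, at bottom, it is the same argument as the paper's: both rest on Bredon's transfer machinery for non-free finite group actions, with the ANR property of $\hat{N}_x$ and $N_x$ supplying the hypothesis that makes everything work. The difference is one of packaging. The paper cites Bredon's Theorem III.7.2 outright, which already asserts the isomorphism $H^*(\hat{N}_x/G;F)\cong [H^*(\hat{N}_x;F)]^G$ --- but in \v{C}ech cohomology, for quite general quotients --- and then uses the ANR property to identify \v{C}ech with singular cohomology, with field coefficients handling the passage from cohomology to homology. You instead take the transfer as the cited input and re-derive the isomorphism by the averaging computation; that computation (injectivity from $\tau\pi_* = N$ plus invertibility of $|G|$ in $F$, surjectivity via the invariant class $\tfrac{1}{|G|}\tau\beta$) is correct and is essentially how Bredon proves his theorem, so you have unpacked the citation rather than replaced it. The one imprecision worth flagging is where you locate the transfer: Bredon constructs it in \v{C}ech/Alexander--Spanier cohomology on general paracompact spaces --- no ANR hypothesis is needed for that construction --- and a singular-homology transfer with your three identities is then obtained only by transporting the \v{C}ech transfer across the comparison isomorphism. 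So the true role of the ANR lemma (in both proofs) is to make \v{C}ech and singular theories agree, not to make the transfer constructible, as your last paragraph suggests. Your proof is complete once that translation step is made explicit, which is exactly the step the paper's citation-based proof spells out.
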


\begin{proof}
For \v{C}ech cohomology, the lemma is a corollary of Theorem III.7.2 in Bredon's text on transformation groups  \cite{Bredon} which states the result for more general quotient spaces.  By the previous lemma, $N_x$ and $\hat{N}_x$ are both ANRs.  The lemma follows directly since singular cohomology and \v{C}ech cohomology are equivalent on ANRs and we are working with field coefficients.
\end{proof}

\begin{proposition} \label{Nx}
$H_\ast(N_x ; \Z) \cong H_\ast(\hat{N_x}; \Z).$
\end{proposition}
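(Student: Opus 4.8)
The plan is to reduce the statement to Lemma \ref{Bre} by recognizing the finite group $\Z_a$ as a subgroup of a connected group acting on $\hat{N}_x$, so that $\Z_a$ acts trivially on homology and the group of invariants appearing in Lemma \ref{Bre} is the entire homology. First I would make the ambient circle explicit: the cyclic group $\Z_a$ is generated by $e^{\sqrt{-1}\,2\pi/a}\in T_i$, and $T_i$ commutes with the remaining factors whose product is $T^{r-1}$, so $T_i$ descends to an action on $\hat{N}_x=S^{2n-3}/T^{r-1}$ whose restriction to $\Z_a\subset T_i$ is exactly the action with quotient $N_x=\hat{N}_x/\Z_a$. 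Since $T_i\cong S^1$ is path connected, every one of its elements is joined to the identity by a path and hence acts by a homeomorphism isotopic to the identity, inducing the identity on $H_\ast(\hat{N}_x;F)$ for any coefficient field $F$. In particular the generator of $\Z_a$ acts trivially, so $H_\ast(\hat{N}_x;F)^{\Z_a}=H_\ast(\hat{N}_x;F)$. Feeding this into Lemma \ref{Bre} with $G=\Z_a$ yields $H_\ast(N_x;F)\cong H_\ast(\hat{N}_x;F)$ for every field $F$ of characteristic $0$ or of characteristic prime to $a$; taking $F=\Q$ already identifies the rational homology, which is precisely what the Poincar\'e-polynomial computation requires.

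To read this at the level of ranks I would invoke the induction running through this section. The space $\hat{N}_x\cong S^{2(n-1)-1}/T^{r-1}$ is a torus quotient of a lower-dimensional sphere, with matroid $M_X/e_j$, so by the inductive hypothesis $H_\ast(\hat{N}_x;\Z)$ is free abelian. Its mod-$p$ and rational Betti numbers then all equal the integral ranks $\beta_i$, and the field isomorphisms above force $\dim_\Q H_i(N_x;\Q)=\beta_i$ together with $\dim_{\mathbb{F}_p}H_i(N_x;\mathbb{F}_p)=\beta_i$ for every prime $p\nmid a$. By universal coefficients this fixes the free rank of each $H_i(N_x;\Z)$ and excludes $p$-torsion for all $p\nmid a$, giving the asserted isomorphism after inverting $a$.

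The step I expect to be the genuine obstacle is the remaining primes, those dividing $a$: Lemma \ref{Bre} is blind at such characteristics, and torsion of order dividing $a$ can actually be created by the quotient. Already when $\hat{N}_x$ is a $3$-sphere carrying a free rotation action of $\Z_a$ the quotient $N_x$ is a lens space with $\Z_a$ in $H_1$, so the finite quotient $\hat{N}_x\to N_x$ cannot be expected to preserve integral homology on its own. Consequently the clean conclusion of the argument above is the isomorphism of homology with $\Q$ (equivalently $\Z[1/a]$) coefficients, and any strictly integral refinement must be extracted from the global structure of $X$ through the sequence (\ref{les mv}) rather than from the finite quotient in isolation.
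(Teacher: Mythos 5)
Your core mechanism is exactly the paper's: $\Z_a$ sits inside the connected group $T_j$, whose action descends to $\hat{N}_x$, so every element of $\Z_a$ acts by a map homotopic to the identity, the invariant subgroup in Lemma \ref{Bre} is all of $H_\ast(\hat{N}_x;\kk)$, and hence $H_\ast(N_x;\kk)\cong H_\ast(\hat{N}_x;\kk)$ whenever $\mathrm{char}\,\kk$ is zero or prime to $a$. The genuine gap is what happens at primes dividing $a$, and the paper closes it with the one idea your argument is missing: the column $j$ is not fixed in advance but is chosen \emph{as a function of the coefficient field}. In characteristic $0$ any column works; in characteristic $p>0$ one picks a column whose gcd $a_j$ is prime to $p$, and such a column always exists because the action is effective --- if $p$ divided every column gcd, it would divide every entry of $Z$, and then the order-$p$ element of $T_1$ would act trivially on $S^{2n-1}$. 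With that field-adapted choice, Lemma \ref{Bre} applies over \emph{every} field, and the universal coefficient theorem then gives the statement in the form the induction needs.

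Your lens-space objection is legitimate --- it is in fact the paper's own example in Section \ref{manifolds} ($Z=\begin{bmatrix}3&1&\dots&1\end{bmatrix}$, $x\in S_1$, $N_x$ a lens space) --- and it correctly shows that Proposition \ref{Nx} cannot hold for an arbitrary fixed $x$: it is only true, and only used, for a point on a column adapted to the coefficients. But your proposed repair, extracting the integral statement from the sequence (\ref{les mv}) for $X$, runs the logic backwards. Proposition \ref{Nx} is an \emph{input} to that Mayer--Vietoris induction: Theorem \ref{poincarepoly} is proved one field at a time, and for each field the deletion/contraction in the Tutte recursion can be performed at a non-loop, non-coloop column that is good for that field (reordering columns is harmless by Lemma \ref{matrix operations}). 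So the integral claim for a bad column is never needed, and it cannot be recovered later from the theorem without circularity. The sentence you were missing is, in effect: ``choose $j$ after you choose $\kk$.''
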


\begin{proof}
Let $\kk$ be a field.  If the characteristic of $\kk$ is zero, then choose any column of $Z$.  When the characteristic of $\kk$ is positive, choose a column $j$ so that the characteristic of $\kk$ does not divide the gcd of the entries of the column.  There is always such a column, otherwise the action would not be effective.  Write $N_x = \hat{N}_x/\Z_{a_j}$ as above. The finite group $\Z_{a_j}$ is a subgroup of the {\em connected} group $T_j$ which acts on $N_x$ by isometries.  Hence every element of $\Z_{a_j}$ acts on $N_x$ by a map homotopic to the identity.  Now, Lemma \ref{Bre} shows that for any field $\kk, \ H_\ast(N_x ; \kk) \cong H_\ast(\hat{N_x}; \kk).$  The universal coefficient theorem finishes the proof.
\end{proof}

With the main obstacle to induction out of the way we are ready to prove the main theorem of this section.

\begin{theorem} \label{poincarepoly} 
Let $X = S^{2n-1}/T^r$ be a quotient of an odd-dimensional sphere by an effective linear action.  Then $H_\ast(X;\Z)$ is a finitely generated abelian group and 
\begin{equation} \label{main equation} \tilde{\PP}(X,t) = \displaystyle\sum \rk \tilde{H}_i(X, \Z) ~t^i = t^{r-1}~T(M_X; 0, t^2).
\end{equation}
 \end{theorem}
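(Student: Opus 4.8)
The plan is to induct on $n$, the number of circles in the join decomposition of $S^{2n-1}$, proving a statement slightly stronger than (\ref{main equation}): that $\tilde H_\ast(X;\Z)$ is free abelian and concentrated in degrees congruent to $r-1$ modulo $2$, with reduced Poincar\'e polynomial $t^{r-1}T(M_X;0,t^2)$. The extra freeness and single-parity concentration are exactly what make the inductive step go through. Fixing a column $e_j$ of a representative matrix, I would split into three cases according to whether $e_j$ is a coloop, a loop, or neither; each case decreases $n$ (and sometimes $r$) and matches one clause of the Tutte recursion from Section \ref{tutte invariants}.

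If $e_j$ is a coloop, then Proposition \ref{coloop=cone} says $X$ is a cone, hence contractible, so $\tilde{\PP}(X,t)=0$; on the Tutte side $T(M_X;x,y)=x\,T(M_X/e_j;x,y)$ vanishes at $x=0$, so both sides of (\ref{main equation}) are $0$ and this is a terminal case (it also settles the base case $n=1$, where $e_1$ is necessarily a coloop). If $e_j$ is a loop, Proposition \ref{loop} gives $X=S_j * X_{M-e_j}$; since joining with a circle is a double suspension, $\tilde H_i(X)\cong \tilde H_{i-2}(X_{M-e_j})$, so $\tilde{\PP}(X,t)=t^2\,\tilde{\PP}(X_{M-e_j},t)$. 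The loop recursion $T(M_X;x,y)=y\,T(M-e_j;x,y)$ together with $r(M_X)=r(M-e_j)$ and the inductive hypothesis for the smaller space $X_{M-e_j}=S^{2(n-1)-1}/T^r$ then yields (\ref{main equation}); freeness and the parity are clearly preserved by the degree shift.

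The main case is when $e_j$ is neither a loop nor a coloop. Here $X$ is the mapping cone of $g\colon N_x\to R_x$, where $R_x=X_{M-e_j}=S^{2n-3}/T^r$ and, by Proposition \ref{Nx}, $H_\ast(N_x;\Z)\cong H_\ast(\hat N_x;\Z)$ with $\hat N_x=X_{M/e_j}=S^{2n-3}/T^{r-1}$. By induction the reduced homology of $R_x$ is concentrated in degrees $\equiv r-1$, while that of $N_x$ — which by Proposition \ref{Nx} has the integral homology of $\hat N_x$ — is concentrated in degrees $\equiv (r-1)-1 \equiv r$, i.e.\ in the \emph{opposite} parity. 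Consequently, in each fixed degree at most one of $\tilde H_i(N_x)$ and $\tilde H_i(R_x)$ is nonzero, forcing the map $g_\ast\colon \tilde H_i(N_x)\to \tilde H_i(R_x)$ in the long exact sequence (\ref{les mv}) of the mapping cone to vanish. The sequence therefore breaks into short exact sequences
\begin{equation*}
0\to \tilde H_i(R_x)\to \tilde H_i(X)\to \tilde H_{i-1}(N_x)\to 0,
\end{equation*}
which split because the right-hand terms are free; hence $\tilde H_i(X)$ is free and $b_i(X)=b_i(R_x)+b_{i-1}(N_x)$. Passing to Poincar\'e polynomials and invoking the inductive formulas gives $\tilde{\PP}(X,t)=t^{r-1}T(M-e_j;0,t^2)+t\cdot t^{r-2}T(M/e_j;0,t^2)=t^{r-1}\bigl(T(M-e_j;0,t^2)+T(M/e_j;0,t^2)\bigr)$, which equals $t^{r-1}T(M_X;0,t^2)$ by the deletion--contraction recursion. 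Both surviving contributions land in degrees $\equiv r-1$, so the parity hypothesis is reproduced for $X$.

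The crux, and the step I expect to require the most care, is the vanishing of $g_\ast$. It is not a formal consequence of the mapping-cone structure; it hinges entirely on the parity concentration supplied by the inductive hypothesis, which is precisely why the statement must be strengthened to track both freeness and the single parity of the nonvanishing homology (over $\Z$ one needs the homology to literally vanish, not merely have rank zero, in the off-parity degrees). The remaining bookkeeping — finite generation, immediate from compactness of $X$ and Proposition \ref{Nx}, and checking that $r(M-e_j)=r$ and $r(M/e_j)=r-1$ so that the exponents of $t$ line up — is routine.
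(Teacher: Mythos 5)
Your proposal is, in substance, the paper's own proof: the same induction on $n$, the same three-way case split mirroring the Tutte recursion (coloop via Proposition \ref{coloop=cone}, loop via Proposition \ref{loop}, and for the main case the mapping-cone long exact sequence (\ref{les mv}) with $R_x = X_{M-e_j}$ and $N_x$ identified homologically with $X_{M/e_j}$ through Proposition \ref{Nx}), and the same parity trick to make the sequence degenerate. The only genuine difference is coefficient bookkeeping. The paper reduces at the outset to arbitrary field coefficients, where ``rank zero'' and ``vanishing'' coincide, so the parity argument needs no extra hypotheses; integral freeness then follows because the Betti numbers agree over every characteristic. You instead stay over $\Z$ and strengthen the inductive statement to record freeness and single-parity concentration, so that the sequences $0 \to \tilde{H}_i(R_x) \to \tilde{H}_i(X) \to \tilde{H}_{i-1}(N_x) \to 0$ split because the right-hand term is free. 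Both routes are valid given the integral statement of Proposition \ref{Nx}; note, however, that the paper's proof of that proposition chooses the column $j$ as a function of the field characteristic, which is precisely the field-by-field subtlety that your integral formulation hides by quoting the proposition as stated.

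Two small repairs are needed. First, your base case is misstated: at $n=1$ the element $e_1$ need not be a coloop. The trivial action of $T^0$ on $S^1$ --- which the paper explicitly includes, and which genuinely arises in your recursion (e.g.\ after contracting in $Z = \begin{bmatrix} 1 & 1 \end{bmatrix}$, or after deleting loops when $r=0$) --- has $e_1$ a loop and $X = S^1$, and your loop case would then recurse to the empty sphere at $n=0$. Either verify $n=1$, $r=0$ directly (both sides of (\ref{main equation}) equal $t$) or fix conventions for the empty matroid; the paper's base case treats both $n=1$ actions explicitly. Second, finite generation is not ``immediate from compactness'' --- compact metric spaces can have non-finitely-generated singular homology --- but this is harmless, since your split short exact sequences deliver finite generation inductively.
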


\begin{proof}
It is sufficient to prove (\ref{main equation}) when using arbitrary field coefficients.  So let $\kk$ be a field (of any characteristic). 

We proceed by induction on $n$.  When $n$ is one there are only two actions to consider.  The circle acting on itself and the trivial action of $T^0 = \{id\}$ on the circle. The latter is an effective action in the sense that every nonidentity element of the group acts nontrivially!  In both cases (\ref{main equation}) is easily verified. 

For the induction step there are three cases to consider: $e_1 \in M_X$ is a coloop, loop, or neither.   If $e_1$ is a coloop, then Proposition \ref{coloop=cone} tells us that $X$ is contractible, so $\tilde{\PP}(X,t) = 0$,  while Tutte recursion insures that $T(M_X; 0, t^2) = 0.$ When $e_1$ is a loop, Proposition \ref{loop} implies that $X = S^1 \ast X_{M_X- e_1}.$  So the induction hypothesis insures that $\tilde{\PP}(X,t) = t^2 ~\tilde{\PP}(X_{M_X-e_1}) = t^{r-1} ~t^2 ~T(M_X-e_1; 0,t^2)  = t^{r-1}~T(M_X; 0, t^2).$

So assume that $e_1$ is neither a loop nor a coloop. Then we have that $r(M-e_1) = r(M)$ and $r(M_X/e_1) = r(M_X) - 1$.   Now consider the long exact sequence (\ref{les mv}).

$$ \dots  \tilde{H}_i(N_x; \kk) \to \tilde{H}_i(R_x; \kk) \to \tilde{H}_i (X; \kk) \stackrel{\partial}{\to} \tilde{H}_{i-1} (N_x; \kk) \to \tilde{H}_{i-1}(R_x; \kk) \to \dots$$
\noindent  The induction hypothesis applied to $R_x$ and $N_x$ (via Proposition \ref{Nx}) implies that, depending on the parity of $i,$ one of two things is happening.  One,   $\tilde{H}_i(R_x; \kk)=0$ and $\tilde{H}_{i-1}(N_x; \kk)=0,$ in which  case $\tilde{H}_i(X; \kk) = 0.$  Or, $\tilde{H}_i(N_x; \kk)=0$ and $\tilde{H}_{i-1}(R_x; \kk) =0$, in which case $\tilde{H}_i (X; \kk) \cong \tilde{H}_i(R_x; \kk) \oplus \tilde{H}_{i-1}(N_x; \kk).$ Combining these two possibilities with the induction hypothesis gives
$$\tilde{\PP}(X,t) = \tilde{\PP}(R_x,t) + t~\tilde{\PP}(N_x, t) = t^{r-1} T(M_X-e_1; 0, t^2) + t^{r-1} T(M_X/e_1; 0,t^2)$$
$$ = t^{r-1} T(M_X; 0, t^2).$$

\end{proof}

The above formula raises two immediate questions. Since all of the homology groups are finitely generated free abelian, $H^i(X) \cong H_i(X)$ for every $i.$  

\begin{prob}
What is the ring structure of $H^\ast(X)?$
\end{prob}
\noindent If the dimension of $X$ is odd, or the rank of $M_X$ is greater than $n,$ then all products in $H^\ast(X)$ must be trivial for purely dimensional reasons. When $M_X$ is rank one without loops $X$ is a weighted projective space. (See Section \ref{manifolds}.) In that special case the ring structure of the cohomology ring of $X$ was determined in \cite{Kawasaki}.   We do not know of any other case with nontrivial products.  

As the homology of $X$ vanishes in every other degree, it is natural to ask whether or not the following holds.
\begin{prob}
Is there a CW-decomposition of $X$ so that all boundary maps are zero?
\end{prob}
\noindent  If so, one might hope that Tutte's theory of basis activity for graphs \cite{Tutte}, extended to matroids by Crapo \cite{Crapo}, might be realized with a natural bijection between the cells of the CW-structure and the bases of $M_X$ with internal activity zero.  

\section{The Singular Set} \label{singset}

Given a quotient space $X = Y/T^r$, the \emph{rational singular set} of the action is the image in the quotient space of the points of $Y$ whose isotropy subgroups are infinite subgroups of $T^r$.  We will denote the rational singular set of the quotient $S^{2n-1}/T^r$ by $\mathcal{S}$ and determine its homotopy type.  

Let $A = \{e_{i_1}, \dots, e_{i_k}\} \subseteq M_X.$  Define $S^A =\{(x_1, x_2, \dots, x_{2n-1}) \in S^{2n-1}: x_{2i-1}=x_{2i}=0 \mbox{ for all } e_i \notin A.\}.$ Equivalently, $S^A$ is the join $S_{i_1} \ast \cdots \ast S_{i_k}$ in  $S^{2n-1}.$  For $x \in S^{2n-1}$ set $A_x$ to be the minimal $A$ such that $x \in S^A.$

  Any $t \in T^r_x$ must fix all of $S^{A_x}.$ Suppose $A_x$ is a spanning subset of $M_X.$   Then the square submatrix of $Z$ whose columns are $A_x$ can be diagonalized over the integers with nonzero diagonal entries $\{c_1, \dots, c_r\}$ by the elementary row operations covered by Lemma \ref{matrix operations}.  This implies $T^r_x$ is contained in a subgroup of $T^r$ isomorphic to $\Z_{c_1} \oplus \cdots \oplus \Z_{c_r}$ and hence is finite.    So, for any $x \in \SSS$ we see that $A_x$ is  a nonspanning subset and hence contained in a hyperplane $H$ of $M_X.$

 Conversely, suppose $H$ is  a hyperplane of $M_X$. In the column space of $Z$, $H$ is the intersection of  the columns of $Z$ with a rational hyperplane.  This hyperplane is perpendicular to an integer vector. Thus there is an element $\gamma$ of the row space which is an integral linear combination of the rows of $Z$ such that the zeros of $\gamma$ correspond to the columns in $H.$  Therefore, there is an element of $T^r$ of infinite order which fixes $S^H.$  As a result, we now know that the preimage of the rationally singular set is the union of all $S^H, ~H$ a hyperplane of $M_X.$ Each $S^H$ is a sphere of dimension $2|H|-1$. Hence the image of $S^H$ in $X$ is of dimension $2|H|-1 - r(H) = 2|H| - 1 - (r(M)-1) = 2|H|-r(M)$.

We define an \emph{arrangement} as a finite collection $\mathcal{A} = \{A_1, \dots, A_m\}$  of closed subspaces of a topological space $U$ such that: \\
i) $A, B \in \mathcal{A}$ implies that $A\cap B$ is a union of spaces in $\mathcal{A}$\\
ii) If $A, B \in \mathcal{A}$ and $A \subseteq B$, then the inclusion map $A \hookrightarrow B$ is a cofribration.
\\

Given $A \subseteq M_X$ define $X_A$ to be $g \circ f(S^A).$  
  Let $\mathcal{A}$ be the set generated by $\{X_H: H \mbox{ is a hyperplane of } M_X\}$ and all of its intersections, including the empty set if this is the intersection of all the $g \circ f(H).$
Now let $P$ be the poset whose elements are the sets in $\mathcal{A}$, ordered by reverse inclusion. The $X_H$ in $\SSS$ are the minimal elements of $P.$ 
Furthermore, the elements of $P$ corresponds to flats of the matroid $M_X$.  In fact, P is isomorphic to $(L_{M_X})^*$, the {\em order} dual of the lattice of flats of M with the maximal element $\hat{1}$ corresponding to $M_X$ removed. In other words,  $P$ is the poset of flats of $M_X$, other than $M_X$, ordered by reverse inclusion.

\begin{proposition} If $X_F, X_G$ are elements of the arrangement $\mathcal{A}$ and $X_F > X_G,$ then the inclusion map $X_F \hookrightarrow X_G$ is homotopic to the constant map.\end{proposition}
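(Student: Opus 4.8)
The plan is to produce an explicit null-homotopy that slides $X_F$ to a single point of $X_G$ along minimal geodesics, mirroring the mapping-cone analysis carried out for $X$ in the discussion preceding (\ref{les mv}). Since $P$ is ordered by reverse inclusion, the hypothesis $X_F > X_G$ means $X_F \subsetneq X_G$, so the underlying flats satisfy $F \subsetneq G$ and we may fix an element $j \in G \setminus F$. Because $j \notin F$, the join $S^F$ lies inside $S^{G \setminus \{j\}} \subseteq S^G$, and $X_F \subseteq R := (S^{G\setminus\{j\}})/T^r \subseteq X_G$. Applying the mapping-cone picture to the restricted action $T^r \curvearrowright S^G$ based at a point $x \in S_j$, the image $\bar x$ of the circle $S_j$ is a single point of $X_G$, and the minimal geodesics of length $\pi/2$ issuing from $\bar x$ sweep out $X_G$, terminating on $R$. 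I would contract $X_F$ into $\bar x$ along these geodesics.

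The heart of the argument is the claim that for every $z \in S^F$ there is a \emph{unique} minimal geodesic in $X_G$ from $\bar z$ to $\bar x$. Writing a point of $S^G$ in join coordinates as $(\cos\phi\, p,\ \sin\phi\, z)$ with $p \in S_j$ and $z \in S^{G\setminus\{j\}}$, the geodesics joining $z$ to $S_j$ are the arcs $\phi \mapsto (\cos\phi\, p,\ \sin\phi\, z)$, one for each $p \in S_j$, and these project to a single path in $X_G$ exactly when the stabilizer $T^r_z$ acts transitively on $S_j$. This is where the matroid hypothesis enters. Let $\ell_i : T^r \to S^1$ denote the character by which $T^r$ rotates $S_i$, with integer weight column $z_{\cdot i}$; then $T^r_z = \bigcap_{i \in A_z}\ker \ell_i$. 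For $z \in S^F$ the minimal set $A_z$ satisfies $A_z \subseteq F$, and since $F$ is a flat and $j \notin F$ we have $r(A_z \cup \{j\}) = r(A_z) + 1$, equivalently $z_{\cdot j}$ is linearly independent over $\Q$ from $\{z_{\cdot i} : i \in A_z\}$. Hence $\ell_j$ restricts to a nontrivial homomorphism on the identity component of $T^r_z$, so $T^r_z$ rotates $S_j$ transitively and the geodesic from $\bar z$ to $\bar x$ is unique.

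With uniqueness in hand, I would assemble the homotopy by descending the continuous map $\tilde\Gamma : S^F \times S_j \times [0,1] \to X_G$ sending $(z,p,\tau)$ to the image of $(\cos((1-\tau)\tfrac{\pi}{2})\, p,\ \sin((1-\tau)\tfrac{\pi}{2})\, z)$. Transitivity of $T^r_z$ on $S_j$ makes $\tilde\Gamma$ independent of $p$, while the diagonal action makes it $T^r$-invariant in $z$; since $S_j$ and $[0,1]$ are compact, $\tilde\Gamma$ descends to a continuous map $\Gamma : X_F \times [0,1] \to X_G$ with $\Gamma(\cdot,0)$ the inclusion $X_F \hookrightarrow X_G$ and $\Gamma(\cdot,1) \equiv \bar x$. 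This exhibits the inclusion as homotopic to the constant map.

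The main obstacle is precisely the uniqueness-of-geodesic claim, equivalently the transitivity of $T^r_z$ on $S_j$ for every $z \in S^F$. For a general point of $R$ this transitivity fails --- which is exactly why $X_G$ need not be a cone --- so it is essential to use that $F$ is a flat and $j \notin F$, as this is what forces $r(A_z \cup \{j\}) = r(A_z)+1$ and hence the needed linear independence of characters, uniformly over all $z \in S^F$ (including the non-generic ones with $A_z \subsetneq F$). The remaining care is routine: verifying that $\tilde\Gamma$ genuinely descends and is continuous, which follows from the compactness noted above.
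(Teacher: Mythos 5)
Your proof is correct and follows essentially the same route as the paper: both arguments pick $j \in G \setminus F$, use flatness of $F$ to conclude that $e_j$ is independent over $F$ (hence over every $A_z \subseteq F$), and contract $X_F$ inside $X_{F \cup \{e_j\}} \subseteq X_G$ to the single point that is the image of $S_j$. The only difference is one of packaging: the paper gets the cone structure on $X_{F \cup \{e_j\}}$ by citing Proposition \ref{coloop=cone} (since $e_j$ is a coloop of the matroid restricted to $F \cup \{e_j\}$), whereas you re-derive that cone structure by hand, proving transitivity of the stabilizers $T^r_z$ on $S_j$ via the character/Lie-algebra argument and writing out the geodesic contraction explicitly.
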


\begin{proof}

   Let $c \in  S^{2n-1}$ be a point on an invariant circle $S_j$ such that $e_j$ is in the flat $G, $ but not $F.$  By Proposition \ref{coloop=cone} $X_{F \cup \{e_j\}} \subseteq X_G$ in $X$ is a cone with base $X_F$ in $X_G.$  

\end{proof}

 The above proposition means that we can use the wedge lemma from \cite{ZZTop} to compute the homotopy type of $\SSS$.  
  
\begin{theorem}  \label{sing homotopy} 
The singular set $\SSS$ is homotopy equivalent to 
$$\bigvee_{\stackrel{F \in L_{M_X}}{F \neq E}} X_F \ast \vee^{\mu(M_X/F)}_{i=1} S^{r - r(F) -2}.$$
\end{theorem}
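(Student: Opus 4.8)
The plan is to apply the wedge lemma of \cite{ZZTop} to the arrangement $\mathcal{A}$ indexed by the poset $P \cong (L_{M_X})^*$, and then identify the resulting wedge summands using the homotopy-triviality of inclusions and the homotopy type of order complexes of flat intervals from Theorem \ref{ordercomplex}. The wedge lemma expresses the homotopy colimit of a diagram of spaces over $P$ as a wedge, where each flat $F \in L_{M_X}$ with $F \neq E$ contributes a summand built from $X_F$ together with the order complex of the subposet of $P$ strictly below the node corresponding to $F$. Since $\SSS$ is the union of the minimal elements $X_H$ of $\mathcal{A}$ and the arrangement satisfies the cofibration hypotheses (i) and (ii), the nerve/homotopy colimit recovers the honest union $\SSS$ up to homotopy.

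First I would set up the diagram: to each flat $F \neq E$ associate the space $X_F$, with the arrangement maps being the inclusions $X_F \hookrightarrow X_G$ whenever $F \supseteq G$ (i.e. $X_F > X_G$ in $P$). The preceding proposition is exactly the input the wedge lemma needs: every such inclusion $X_F \hookrightarrow X_G$ is null-homotopic. This is what allows the wedge lemma to split $\SSS$ into a wedge of joins $X_F \ast \Delta_F$, where $\Delta_F$ is the order complex of the open lower interval below $F$ in $P$. Under the order-dual identification $P \cong (L_{M_X})^*$, the subposet strictly below the element $F$ in $P$ is the subposet strictly \emph{above} $F$ (and below $E$) in $L_{M_X}$, namely the open interval $(F, E)$ in the lattice of flats.

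Next I would compute the homotopy type of that open interval. By the statement in Section \ref{tutte invariants}, the interval $[F,E]$ in $L_{M_X}$ is isomorphic as a poset to $L_{M_X/F}$; hence the open interval $(F,E)$ corresponds to $\widetilde{L_{M_X/F}}$, the lattice of flats of the contraction with its least and greatest elements removed. Theorem \ref{ordercomplex} then gives that $\Delta(\widetilde{L_{M_X/F}})$ is homotopy equivalent to a wedge of $\mu(M_X/F)$ spheres, each of dimension $r(M_X/F) - 2 = (r - r(F)) - 2$. Substituting this into the join $X_F \ast \Delta_F$ and using that the join of $X_F$ with a wedge of $k$ spheres $S^{d}$ is the wedge $\vee^{k} (X_F \ast S^{d})$ yields precisely the claimed summand $X_F \ast \vee_{i=1}^{\mu(M_X/F)} S^{r - r(F) - 2}$. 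Taking the wedge over all flats $F \neq E$ gives the theorem.

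The main obstacle I expect is bookkeeping the order-dual carefully: the wedge lemma is naturally stated for a diagram over a poset where summands come from subposets \emph{below} each node, whereas our $P$ is the lattice of flats with the order reversed, so ``below in $P$'' translates to ``above in $L_{M_X}$.'' I would need to verify that the indexing in the wedge lemma of \cite{ZZTop} matches this convention, and in particular that the contribution of the node $F$ involves the order complex of $(F,E)$ rather than $(\bar\emptyset, F)$. A secondary technical point is confirming the dimension and connectivity bookkeeping in the join: since $\Delta(\widetilde{L_{M_X/F}})$ is a wedge of $(r-r(F)-2)$-spheres, the join with $X_F$ shifts dimensions by the expected amount and distributes over the wedge, which is routine once the homotopy equivalence is in hand. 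The cofibration hypothesis (ii) for the arrangement, needed so that the homotopy colimit agrees with the actual union $\SSS$, should follow from the fact that all inclusions here are inclusions of (images of) subspheres, which are cofibrations.
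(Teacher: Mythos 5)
Your proposal is correct and takes essentially the same route as the paper's own proof: apply the wedge lemma of \cite{ZZTop} to the arrangement poset $P \cong (L_{M_X})^*$, use the null-homotopy proposition as its hypothesis, identify $P_{< X_F}$ with the open interval $(F,E) \cong \widetilde{L_{M_X/F}}$, and invoke Theorem \ref{ordercomplex} together with the distributivity of join over wedge. One minor slip: since $F \subseteq G$ gives $X_F \subseteq X_G$, the inclusion $X_F \hookrightarrow X_G$ corresponds to $F \subseteq G$ (not $F \supseteq G$ as you wrote), but your subsequent, and decisive, identification of the subposet of $P$ below $X_F$ with $(F,E)$ rather than $(\bar\emptyset,F)$ is the correct one, so the argument stands.
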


\begin{proof}
By the wedge lemma in \cite{ZZTop}, $\SSS$ is homotopy equivalent to 
$$\bigvee_{X_F \in P} X_F \ast \Delta(P_{< X_F}),$$
where $P_{< X_F}$ is the subposet of $P$ consisting of all elements of $P$ strictly less than $X_F.$  By definition this is the order dual of the interval $[F,E]$ in $L_{M_X}$ with $F$ and $E$ removed.  Since the order complex of a poset and its order dual are isomorphic, the result now follows from the fact that $[F,E] \cong L(M_X/F)$ and Theorem \ref{ordercomplex}.
\end{proof}

 With the homotopy type of singular set in hand, it is easy to compute the reduced Poincar\'e polynomial of $\SSS.$
  
\begin{theorem} The reduced Poincare polynomial of the singular set of the action with integral coefficients is given by $\tilde{\mathbb{P}}(\mathcal{S}, t) = t^{r(M)-2} [  T(M; 1, t^2) - T(M; 0, t^2)]$.\end{theorem}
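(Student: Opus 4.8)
The plan is to compute the reduced Poincar\'e polynomial directly from the homotopy decomposition in Theorem \ref{sing homotopy}. Since $\SSS$ is homotopy equivalent to a wedge, its reduced homology is the direct sum of the reduced homologies of the wedge summands, and hence $\tilde{\mathbb{P}}(\SSS, t)$ is the sum over flats $F \neq E$ of the reduced Poincar\'e polynomials of $X_F \ast \bigvee^{\mu(M_X/F)} S^{r-r(F)-2}$. The key tool is that the reduced homology of a join satisfies $\tilde{H}_k(A \ast B) \cong \bigoplus_{i+j = k-1} \tilde{H}_i(A) \otimes \tilde{H}_j(B)$ (there is no Tor obstruction here because, by Theorem \ref{poincarepoly}, the homology of each $X_F$ is free abelian, and the sphere wedge has free homology as well). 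In terms of Poincar\'e polynomials this means $\tilde{\mathbb{P}}(A \ast B, t) = t \cdot \tilde{\mathbb{P}}(A, t) \cdot \tilde{\mathbb{P}}(B, t)$.

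First I would identify $X_F$ with a quotient space to which Theorem \ref{poincarepoly} applies. The flat $F$ corresponds to a subset of columns, and $X_F = g\circ f(S^F)$ is the image of the subsphere $S^F = S_{i_1}\ast\cdots\ast S_{i_k}$. As noted in the text preceding the theorem, $X_F$ is the quotient of a sphere of dimension $2|F|-1$ by the torus action restricted to those circles, and the associated matroid is the restriction $M_X|F$, which has rank $r(F)$. Thus by Theorem \ref{poincarepoly}, $\tilde{\mathbb{P}}(X_F, t) = t^{r(F)-1} T(M_X|F; 0, t^2)$. Second, the wedge of $\mu(M_X/F)$ spheres of dimension $r - r(F) - 2$ has reduced Poincar\'e polynomial $\mu(M_X/F)\, t^{r-r(F)-2}$, where I write $\mu$ for $|\mu|$ since the number of spheres is $|\mu(M_X/F)|$ by Theorem \ref{ordercomplex}. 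Combining via the join formula, each summand contributes
$$
t \cdot t^{r(F)-1} T(M_X|F; 0, t^2) \cdot \mu(M_X/F)\, t^{r - r(F) - 2} = t^{r-2}\, \mu(M_X/F)\, T(M_X|F; 0, t^2).
$$
The factor $t^{r-2} = t^{r(M)-2}$ pulls out of the entire sum, matching the prefactor in the statement.

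What remains is the purely combinatorial identity
$$
\sum_{F \neq E} \mu(M_X/F)\, T(M_X|F; 0, t^2) = T(M;1,t^2) - T(M;0,t^2),
$$
and this is the step I expect to be the main obstacle. I would approach it by recalling the standard Whitney-rank / corank-nullity expansion of the Tutte polynomial, $T(M;x,y) = \sum_{A \subseteq E}(x-1)^{r(E)-r(A)}(y-1)^{|A|-r(A)}$, and the fact that $|\mu(M_X/F)| = T(M_X/F; 1, 0)$ when $M_X/F$ is loopless (which holds precisely because $F$ is a flat). The plan is to rewrite $T(M;1,t^2)$ by grouping subsets $A$ according to their closure $\bar{A} = F$; since $r(A) = r(F)$ and setting $x=1$ annihilates every term with $r(A) < r(E)$, the specialization $T(M;1,t^2)$ should reorganize into a sum over flats $F$ of a Tutte value on the restriction $M_X|F$ times a M\"obius contribution from the contraction $M_X/F$ coming from the $x=1$ evaluation on the interval $[F,E]$. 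Subtracting off the single term $F = E$, which contributes exactly $T(M;0,t^2)$ (the $x=0$ specialization isolates the spanning, i.e. full-rank, contribution), should yield the sum over $F \neq E$ with the correct signs absorbed into $\mu(M_X/F) = T(M_X/F;1,0)$. I would verify the bookkeeping of the M\"obius signs and the loopless hypothesis carefully, and sanity-check the final identity on a small example such as a uniform matroid or a single circuit, where both sides can be computed by hand.
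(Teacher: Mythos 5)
Your topological reduction is exactly the paper's: you use the wedge decomposition of Theorem \ref{sing homotopy}, the K\"unneth/join formula $\tilde{\PP}(A \ast B, t) = t\,\tilde{\PP}(A,t)\,\tilde{\PP}(B,t)$ (valid here since all homology groups involved are free), Theorem \ref{poincarepoly} applied to $X_F$ with matroid $M|F$, and the sphere count $|\mu(M_X/F)|$, arriving at the per-flat contribution $t^{r-2}\mu(M_X/F)\,T(M|F;0,t^2)$. The combinatorial identity you then isolate as the remaining obstacle, $\sum_{F \in L_M} T(M/F;1,0)\,T(M|F;0,t^2) = T(M;1,t^2)$ (with the $F=E$ term equal to $T(M;0,t^2)$), is precisely the Kook--Reiner--Stanton convolution formula, which the paper does not prove but simply cites \cite{Kook}. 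So the paper's proof ends exactly where your ``main obstacle'' begins.

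The gap is that your proposed derivation of this identity cannot work as described. In the corank--nullity expansion, setting $x=1$ annihilates every term with $r(A) < r(E)$, so the only surviving subsets are the spanning sets---and every spanning set has closure $E$, since $\mathrm{cl}(A)$ is a flat of rank $r(A)$ and $E$ is the unique flat of full rank. Grouping the surviving terms by closure therefore produces the single group $F=E$ and returns $T(M;1,t^2)$ unchanged: no sum over proper flats, no restriction Tutte polynomials, and no M\"obius function can emerge from that mechanism. What does work is to group by closure \emph{before} specializing, which gives $T(M;x,y)=\sum_{F\in L_M}(x-1)^{r(E)-r(F)}\,T(M|F;1,y)$, because the subsets with closure $F$ are exactly the spanning subsets of $M|F$; but this produces $T(M|F;1,y)$, not $T(M|F;0,y)$, and with coefficient $(x-1)^{r(E)-r(F)}$ rather than a M\"obius value. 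Converting it into the needed identity requires inverting this triangular relation: apply it to each $M|F$ at $x=0$ to get $T(M|F;0,y)=\sum_{G\leq F}(-1)^{r(F)-r(G)}T(M|G;1,y)$, substitute into $\sum_F |\mu(M/F)|\,T(M|F;0,y)$, and collapse the resulting double sum using Rota's theorem that $\mu(F,E)$ has sign $(-1)^{r(E)-r(F)}$ in a geometric lattice, so that the inner sum becomes $(-1)^{r(E)-r(G)}\sum_{G\leq F\leq E}\mu(F,E)=\delta(G,E)$. That sign theorem is an essential ingredient your sketch never supplies, and without it the ``M\"obius contribution from the interval $[F,E]$'' remains a hope rather than an argument. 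The cleanest repair is either to carry out the two-step inversion above or to do what the paper does and quote the convolution formula of Kook, Reiner, and Stanton.
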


\begin{proof}
  By the previous theorem, Theorem \ref{poincarepoly} and the results of Section \ref{integral homology}, 
$$\tilde{\PP}(\SSS, t) = \displaystyle\sum_{\stackrel{F \in L_{M_X}}{F \neq E}} \tilde{\PP}(X_F  \ast \vee^{\mu(M_X/F)}_{i=1} S^{r-r(F)-2},t)$$
\begin{equation} \label{singpoly}
= \displaystyle\sum_{\stackrel{F \in L_{M_X}}{F \neq E}} t^{r-2}~\mu(M_X/F)~  T(F; 0,t^2), 
\end{equation}
\noindent The last equality uses the usual computation of $\tilde{\PP}$ for the join of a space and a wedge of spheres of the same dimension via the K\"unneth theorem.  

Recall from the end of Section \ref{tutte invariants} that $\mu(M_X/F) = T(M_X/F; 1,0)$ whenever $M_X$ has no loops. This is the case here, since for any flat $F$ of any matroid $M$, $M/F$ has no loops.  Now compare (\ref{singpoly}) with the Kook, Reiner Stanton convolution formula \cite{Kook} for any matroid $M,$
$$T(M; 1,t^2) = \displaystyle\sum_{F \in L_M} T(M/F; 1,0) \cdot T(F; 0,t^2).$$
\end{proof}

\section{Manifolds} \label{manifolds}

One natural question to ask is, ``When is $X$ a topological manifold?"  The Tutte polynomial specialization $T(M_X; 0,t^2)$ is always of the form
$$ t^{2(n-r)} + b_{n-r-1} t^{2(n-r-1)} + \dots + b_1 t^2,$$
with $b_i \in \Z_{\ge 0}.$ Furthermore, if $b_i > 0$ and $i<n,$ then $b_{i+1} >0$.  This can be shown by a deletion and contraction argument.   This fact, Poincar\'e duality, and our formula for $H_\ast(X;\Q),$ imply that there are only two potential situations for $X$ to be a manifold (without boundary). If $r =1$, then $M_X$ is the rank one uniform matroid, where a subset of $M_X$  is independent if and only if it has cardinality one or zero.  Otherwise, we have $T(M_X; 0, t^2) = t^{2(n-r)}.$  If $M_X$ does not meet one of these two criteria, then the $(2n-1-r)$-dimensional $X$ has $H_{2n-r-3}(X, \Q) \neq 0$ and $ H_2(X, \Q) =0$, preventing the orbit space from satisfying Poincar\'e duality.

\subsection{$r=1$}

In this case $Z=[a_1 a_2 \dots a_n]$ with all $a_i \neq 0.$  These quotient spaces have been studied under the names twisted projective spaces or weighted projective spaces.  By using Lemma \ref{matrix operations}  we can further simplify and assume all the $a_i$ are positive and $a_1 \ge a_2 \ge \dots \ge a_n$. For instance, if $Z=\begin{bmatrix} 1 & 1  &\dots& 1\end{bmatrix},$ then $X$ is $\C P^n.$ As we will see  below, when $A=\begin{bmatrix} a_1 & a_2 \end{bmatrix}$, the quotient $X$ is always homeomorphic to $S^2$.   On the other hand, consider $Z=\begin{bmatrix} 3 & 1  &\dots& 1 \end{bmatrix}.$  Let $x \in S_1.$  Then $T_x \cong \Z_3$ which acts on all $S_j, j \neq 1,$ by rotation.  Hence $N_x$ is a lens space and excision shows $H_\ast(X, X-\{\bar{x}\})$ is not isomorphic to the homology of a sphere.  Thus $X$ cannot be a manifold. 

The necessity portion of Proposition \ref{CPn} can easily be obtained by applying \cite[Theorem 1]{Kawasaki}.  However, we give a direct proof here.  

\begin{proposition}  \label{CPn}
If $Z=\begin{bmatrix}a_1 & \dots & a_n \end{bmatrix}$ with all $a_1 \ge a_2 \ge \dots \ge a_n  > 0$, then $X$ is a manifold if and only if $n=2,$  or $a_1 = a_2 = \dots = a_{n-1}$ and $a_n = 1.$ Furthermore, if $X$ is a manifold, then $X$ is homeomorphic to $\C P ^n.$
 \end{proposition}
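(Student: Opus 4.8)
The plan is to prove both directions by analyzing the local structure at a point $x \in S_n$, the circle carrying the smallest weight $a_n$. The key geometric input is the mapping-cone decomposition from Section \ref{integral homology}: $X$ is the mapping cone of $g \colon N_x \to R_x$, where $N_x \cong S^{2n-3}/T^1_x$ is the space of directions. Since $r=1$, a point $x \in S_j$ has stabilizer $T^1_x \cong \Z_{a_j}$ acting on the remaining circles. For $X$ to be a manifold, we need $N_x$ to be a sphere (so that $x$ has a Euclidean neighborhood), and more precisely we need the local homology $H_*(X, X - \{\bar x\})$ to agree with that of a sphere at every point. The strategy is to pin down exactly when each $N_x$ is a (homology) sphere and then assemble the global conclusion.

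\textbf{Necessity.} First I would compute $N_x$ for $x \in S_j$. Here $\Z_{a_j}$ acts on $S^{2n-3} = S_1 * \cdots * \hat S_j * \cdots * S_n$ by rotation by $a_i / a_j$ (appropriately normalized) on each $S_i$, so $N_x$ is a lens-space-like quotient of a sphere by a finite cyclic group. If $a_j > 1$, this action of $\Z_{a_j}$ is generically free only if $a_j \mid a_i$ for enough $i$; otherwise it has fixed circles and $N_x$ will fail to be a homology sphere. The cleanest route is the excision/local-homology computation illustrated in the worked example $Z = [\,3\ 1\ \cdots\ 1\,]$: if some weight $a_j > 1$ does not divide all the others, one exhibits a point where $N_x$ has the homology of a lens space (nontrivial torsion, or the wrong Betti numbers), so $H_*(X, X - \{\bar x\})$ is not that of a sphere, forcing $X$ to be a non-manifold. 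Ruling out all weight patterns except $a_1 = \cdots = a_{n-1}$ with $a_n = 1$ (for $n \ge 3$) then becomes a number-theoretic divisibility check on the $a_j$: one shows that the divisibility conditions required to make every $N_x$ a homology sphere collapse to this single pattern. The case $n=2$ is handled separately, since $S^3/\Z_{a}$-type quotients over a $1$-dimensional base always give $S^2$.

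\textbf{Sufficiency.} For the converse, suppose $a_1 = \cdots = a_{n-1} = b$ and $a_n = 1$. By Lemma \ref{matrix operations} we may divide the row by $\gcd$; I would show that after simplification the action is conjugate to the standard weighted $\C P^n$ action, and then exhibit an explicit homeomorphism $X \cong \C P^n$. The natural construction is to use the free $S^1$-coordinate coming from the weight-$1$ circle $S_n$: since $a_n = 1$, the action is free on an invariant sphere and one can parametrize $X$ directly. For the base case $n=2$ with arbitrary $[a_1\ a_2]$, the quotient is the suspension of the circle quotient $S^1/(T^1$ acting with some finite kernel$)$, which is $S^1$, yielding $S^2$; I would present this as a short direct argument.

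\textbf{Main obstacle.} The hardest step will be the necessity direction: extracting the precise statement that the \emph{only} surviving weight pattern is $a_1 = \cdots = a_{n-1}$, $a_n = 1$. The local homology computation at a single point $S_j$ only tells us that $N_x$ must be a homology sphere; translating ``$N_x$ is a homology sphere for all $j$'' into the exact divisibility constraints on $(a_1, \dots, a_n)$, and then showing those constraints force the claimed pattern rather than merely $a_j \mid a_i$ for all $i$, is the delicate combinatorial-topological core. I expect this to require either an inductive application of the mapping-cone long exact sequence (\ref{les mv}) together with Proposition \ref{Nx} to reduce $N_x$ to a lower-dimensional weighted quotient, or a careful direct analysis of the $\Z_{a_j}$-fixed-point sets in the join.
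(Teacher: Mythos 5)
Your overall strategy---excision/local-homology at points of the invariant circles for necessity, and a reduction of the weight pattern $(b,\dots,b,1)$ to the standard Hopf action for sufficiency---is the same as the paper's, but both halves have genuine gaps as written, and one of your stated criteria is false. For necessity, you claim that if some weight $a_j>1$ does not divide all the others, then $N_x$ fails to be a homology sphere and $X$ is not a manifold. The manifold case itself is a counterexample: for $Z=\begin{bmatrix} b & \dots & b & 1\end{bmatrix}$ with $b>1$, the weight $a_1=b$ does not divide $a_n=1$, yet at $x\in S_1$ the stabilizer $\Z_b$ acts nontrivially only on the single circle $S_n$, so $N_x = S_2 \ast \cdots \ast S_{n-1} \ast (S_n/\Z_b)$ is again a sphere. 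The correct criterion, which the paper imports from \cite{Willson}, is that the quotient of $S_1 \ast \cdots \ast \hat{S_j} \ast \cdots \ast S_n$ by $\Z_{a_j}$ has the integral homology of a sphere only when $\Z_{a_j}$ rotates \emph{at most one} of the remaining circles. With that in hand, the ``delicate combinatorial core'' you flag as your main obstacle disappears and no general divisibility analysis is needed: at $x\in S_1$, nontriviality on $S_i$ means $a_1 \nmid a_i$, and since $a_1 \ge a_i > 0$ this forces $a_1 = \cdots = a_{n-1}$ (only $S_n$ may be rotated); then at $z \in S_n$, either $a_n$ divides every other weight, which kills effectiveness unless $a_n=1$, or $\Z_{a_n}$ rotates all $n-1 \ge 2$ remaining circles, violating the criterion. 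Your proposal leaves exactly this step open, and starts from an incorrect inequality.

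The sufficiency direction is also not carried out by what you propose. ``Divide the row by the gcd'' is a no-op, since $\gcd(b,\dots,b,1)=1$, and the action is \emph{not} free away from a small set: every point of the join $S_1 \ast \cdots \ast S_{n-1}$ has stabilizer $\Z_b$, so one cannot ``parametrize $X$ directly by the free $S^1$-coordinate.'' The paper's actual device is a two-stage quotient: first divide $S^{2n-1}$ by the finite subgroup $\Z_b \subset T^1$, which acts trivially on $S_1,\dots,S_{n-1}$ and by rotation on $S_n$; since a circle modulo a finite rotation group is again a circle, the intermediate quotient is homeomorphic to $S^{2n-1}$, and the residual torus $T^1/\Z_b$ acts on it with all weights equal to $1$, i.e.\ by the Hopf action, so the final quotient is $\C P^n$ (alternatively one can cite \cite{Kawasaki}, as the paper notes). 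Your $n=2$ case is fine and matches the paper's mapping-cone argument, but without the corrected Willson-type criterion and the two-stage quotient, both the ``only if'' direction for $n\ge 3$ and the furthermore clause remain unproven.
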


\begin{proof}
Suppose $n=2$.  Consider the mapping cone structure of $X.$  Since $a_2 \neq 0, \ R_x$ is a point.  On the other hand, $N_x = S^1/\Z_{a_1}$ and hence homeomorphic to the circle.  So $X$ is homeomorphic to the mapping cone of the circle mapped to a point.  Thus $X$ is homeomorphic to $\C P^1.$

Now we assume $n \ge 3.$  Let $x \in S_1.$  Then $T_x = \Z_{a_1}$ and $T_x$ acts nontrivially on every circle $S_j$ with $a_j < a_i.$  The homology of $N_x,$ and by excision, the pair $(X, X-\{\bar{x}\}),$ will not be that of a sphere unless the action of $T_x$ is trivial on all the $S_i, i \neq n$ \cite{Willson}.  Thus $a_1 = a_2 = \dots = a_{n-1}.$  To see that $a_n=1,$ suppose $a_n > 1.$  Let $z \in S_n,$ so $T_z = \Z_{a_n}.$  If $a_n$ divides all of the other $a_i,$ then the action is not effective.  If it does not, the $T_z$ acts nontrivially on all of the other circles and the usual excision argument shows that $X$ cannot be a manifold.  

Lastly, we have to show that if $Z=\begin{bmatrix} a_1 & \dots & a_1 & 1 \end{bmatrix},$ then $X$ is homeomorphic to $\C P^n.$  Break up the action $T \curvearrowright S^{2n-1}$ into two parts.  First quotient out by $\Z_{a_1}.$  This subgroup acts trivially on all the circles except $S_n.$  Hence it leaves a quotient space $\bar{X}$ homeomorphic to $S^{2n-1}.$  Now act on $\bar{X}$ by $T/T_{a_1}.$  This group is also a rank one torus and this action is equivalent to $\bar{Z} = \begin{bmatrix} 1 & \dots & 1 \end{bmatrix}.$  Therefore, $X$ is homeomorphic to $\C P^n.$
\end{proof}

\subsection{Spheres}

When is $T(M_X; 0, t^2) = t^{2(n-r)}?$  Obviously this is the same as when $T(M_X; 0,t) = t^{n-r}.$ 

\begin{proposition}  \label{tp-circuits}
Let $M$ be a rank $r$ matroid with $n$ elements.  Then $T(M;0,t) = t^{n-r}$ if and only if $M$ is a direct sum of circuits. 
\end{proposition}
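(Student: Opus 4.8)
The plan is to prove both directions by induction on $n$ using the deletion-contraction recursion for the Tutte polynomial, together with the structural characterization of circuits via loops, coloops, and the direct sum decomposition noted in the excerpt. The key observation is that the specialization $t \mapsto T(M;0,t)$ interacts cleanly with the recursion: for a single coloop $T(\text{coloop};0,t) = 0$, for a single loop $T(\text{loop};0,t) = t$, and for the direct sum $T(M_1 \oplus M_2; 0,t) = T(M_1;0,t)\,T(M_2;0,t)$. In particular, the presence of a coloop forces the whole specialization to vanish, so any matroid with $T(M;0,t) = t^{n-r} \neq 0$ must be coloop-free; this is the first reduction I would make.

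For the reverse direction, suppose $M$ is a direct sum of circuits $M = C_1 \oplus \cdots \oplus C_k$, where $C_i$ is a circuit on $m_i$ elements, hence of rank $m_i - 1$. A circuit on $m$ elements is the uniform matroid $U_{m-1,m}$, and a direct computation (or an easy induction using recursion axiom (3), since a circuit has no loops or coloops when $m \ge 2$) gives $T(U_{m-1,m}; 0,t) = t$; the degenerate case $m=1$ is a loop, which also gives $t$. By multiplicativity over direct sums, $T(M;0,t) = \prod_{i=1}^k t = t^k$. Since $n = \sum m_i$ and $r = \sum (m_i - 1) = n - k$, we have $k = n - r$, giving $T(M;0,t) = t^{n-r}$ as required.

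For the forward direction, assume $T(M;0,t) = t^{n-r}$ and induct on $n$. First dispose of loops: if $e$ is a loop, then $M = (M-e) \oplus e$ and $T(M;0,t) = t\cdot T(M-e;0,t)$, so $M-e$ satisfies the hypothesis on a smaller ground set and by induction is a direct sum of circuits; adjoining the loop $e$ (itself a one-element circuit) preserves this. So assume $M$ is loopless, and recall it is coloopless by the reduction above. Pick any element $e$; since it is neither a loop nor coloop, recursion gives $T(M;0,t) = T(M-e;0,t) + T(M/e;0,t)$. The main obstacle is controlling these two nonnegative summands: I would use the fact, stated in the Manifolds section of the excerpt, that $T(M;0,t)$ always has the form $t^{n-r} + b_{n-r-1}t^{n-r-1} + \cdots$ with leading coefficient $1$ and nonnegative integer coefficients, so the hypothesis $T(M;0,t) = t^{n-r}$ is precisely the statement that all lower coefficients vanish. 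Since $r(M-e) = r$ (as $e$ is not a coloop) while $r(M/e) = r-1$, the two terms $T(M-e;0,t)$ and $T(M/e;0,t)$ are polynomials whose top degrees are $n-1-r$ and $n-1-(r-1) = n-r$ respectively, and each has leading coefficient $1$ and nonnegative coefficients; their sum being the single monomial $t^{n-r}$ forces $T(M-e;0,t) \equiv 0$ and $T(M/e;0,t) = t^{n-r}$.

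The vanishing $T(M-e;0,t) \equiv 0$ is the crux. By the contrapositive of the reduction observed above, $T(N;0,t) \equiv 0$ exactly when $N$ has a coloop, so $M - e$ must have a coloop. I would then argue that an element $f$ which becomes a coloop of $M-e$ but is not a coloop of $M$ must be such that $\{e,f\}$ behaves like a parallel class or, more carefully, that the coloops of $M-e$ are precisely the elements lying in series with $e$; a standard matroid fact is that $f$ is a coloop of $M-e$ if and only if $\{e,f\}$ is a \emph{cocircuit} of $M$, equivalently $e$ and $f$ are in series. From this I would extract the connected component of $M$ containing $e$ and show it must be a single circuit: contracting $e$ drops the rank by one while leaving the rest as the monomial $t^{n-r}$, and by induction $M/e$ is a direct sum of circuits; tracing how $e$ re-attaches, using that every element of $M-e$ in the component of $e$ is a coloop there (hence that component minus $e$ is free), shows the component of $e$ is exactly a circuit $C$ with $M = C \oplus (M \setminus C)$. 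Applying the induction hypothesis to $M \setminus C$ finishes the proof. I expect this last structural step—pinning down that the coloop structure of $M-e$ forces the $e$-component to be a single circuit—to be the main technical obstacle, and I would lean on the series/cocircuit characterization together with the inductive description of $M/e$ to make it precise.
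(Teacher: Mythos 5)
Your overall route (induction on $n$ through deletion--contraction) is the one the paper gestures at, and the backward direction plus the reduction to the splitting $T(M-e;0,t)\equiv 0$ and $T(M/e;0,t)=t^{n-r}$ are essentially right (one phrasing slip: $T(M-e;0,t)$ cannot both ``have leading coefficient $1$'' and be forced to vanish; the correct input is the dichotomy that $T(N;0,t)$ is identically zero when $N$ has a coloop and otherwise is monic of degree equal to the nullity of $N$ with nonnegative coefficients, and the hypothesis then kills the second alternative for $M-e$). The genuine gap is in your last step. From $T(M-e;0,t)\equiv 0$ you may conclude only that $M-e$ has \emph{some} coloop, i.e.\ that $e$ has \emph{one} series partner $f$; your sketch then rests on the assertion that \emph{every} element of the component of $e$ is a coloop of $M-e$, which is nothing you have established --- it is essentially the desired conclusion for that component, restated. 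Nor can it be rescued by running the vanishing argument at every element: the property ``$M-g$ has a coloop for every $g$'' (every element has a series partner) is strictly weaker than being a direct sum of circuits. For instance, the cycle matroid of the theta graph consisting of three internally disjoint paths of length two between two vertices has every edge in series with another edge, yet it is connected and $T(\Theta;0,t)=t+t^{2}\neq t^{2}=t^{n-r}$. So coloop-counting in single-element deletions alone cannot finish the proof; the inductive structure of $M/e$ must enter in an essential way.

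Here is how to close the gap with the ingredients you already named. By induction $M/e=C_1\oplus\cdots\oplus C_k$ with each $C_i$ a circuit, and the circuits of this direct sum are exactly $C_1,\dots,C_k$; say $f\in C_1$. Since a circuit and a cocircuit never meet in exactly one element, the cocircuit $\{e,f\}$ forces every circuit of $M$ through $e$ to contain $f$. As $e$ is not a coloop, it lies in some circuit $D$; then $D\setminus e$ is a circuit of $M/e$, hence equals some $C_i$, and since $f\in D\setminus e$ and the $C_i$ are disjoint, $D=C_1\cup\{e\}$. The same computation shows $D$ is the \emph{unique} circuit of $M$ containing $e$. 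Uniqueness gives $e\notin\mathrm{cl}(E\setminus D)$ (a circuit witnessing $e\in\mathrm{cl}(E\setminus D)$ would be a second circuit through $e$), and from this one computes $r(D)+r(E\setminus D)=r(M)$, so that $M=M|_D\oplus M|_{E\setminus D}$ with $M|_D$ the circuit $D$. Multiplicativity of the Tutte polynomial then gives $T(M|_{E\setminus D};0,t)=t^{n-r-1}$, which is exactly the nullity of $M|_{E\setminus D}$, and the induction hypothesis applied to $M|_{E\setminus D}$ finishes. With this replacement your plan becomes a complete proof, and it is (necessarily) in the spirit of the paper, whose entire stated proof is that this is ``an elementary deletion-contraction argument.''
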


\begin{proof}
An elementary deletion-contraction argument using the recursive definition of the Tutte polynomial.
\end{proof}

As noted in the introduction, one of the obvious questions when considering linear quotients of spheres is, ``When is the quotient space homeomorphic to a sphere?"  For real tori the answer is, at least in the language of matroids,  essentially the same as for $\Z_2$-tori \cite[Theorem 4]{BSQ}.  In preparation for this result we consider what happens when $M_X$ is a direct sum of smaller matroids.  

Suppose $M_X = M_1 \bigoplus \cdots \bigoplus M_l.$ Let $n_i$ be the cardinality of $M_i,$  and $r_i$ the rank of $M_i.$ So $\sum n_i = n$ and $\sum r_i =r.$ Then it is possible, after applying Lemma \ref{matrix operations}, to write $Z$ in block diagonal form with $l$ blocks of size $r_i \times n_i.$  Denote the blocks by $Z_i.$  Each $Z_i$ corresponds to a quotient space $X_i = S^{2n_i -1}/ T^{r_i}.$  Now it is possible to write $T^r = T^{r_1} \times \cdots \times T^{r_l},$ and  $S^{2n-1} = S^{2n_1-2} \ast \cdots \ast S^{2n_l-1}$ so that each $T^{r_i}$ acts trivially on every $S^{2n_j - 1}$ when $i \neq j.$ From these decompositions the following proposition is clear. 

\begin{proposition} \label{direct sum=join}
Suppose $M_X = M_1 \bigoplus \cdots \bigoplus M_l$  with notation as above.  Then $X = X_1 \ast \cdots \ast X_l.$
\end{proposition}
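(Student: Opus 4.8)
The plan is to make the ``following proposition is clear'' explicit by unwinding the two product decompositions already set up in the paragraph preceding the statement. The key observation is that both the torus and the sphere split compatibly along the block structure of $Z$, and that the join decomposition of the sphere is respected by the block-diagonal action. First I would invoke Lemma \ref{matrix operations} to put $Z$ in block-diagonal form with blocks $Z_i$ of size $r_i \times n_i$, where $Z_i$ represents $M_i$. Since isometry type of the quotient is unaffected by these operations, it suffices to prove the join decomposition for this normal form of $Z$.

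Next I would identify the action concretely on the join. Writing $S^{2n-1} = S^{2n_1 - 1} \ast \cdots \ast S^{2n_l - 1}$, where the $i$th factor is the subsphere spanned by the circles $S_j$ with $e_j \in M_i$, the block-diagonal form of $Z$ says precisely that each subtorus $T^{r_i}$ acts nontrivially only on the coordinates of the $i$th join factor $S^{2n_i - 1}$ and fixes every other factor pointwise. Consequently the product torus $T^r = T^{r_1} \times \cdots \times T^{r_l}$ acts coordinate-wise on the join: an element $(t_1, \dots, t_l)$ sends a join point to the point obtained by applying $t_i$ to the $i$th factor. This is the standard fact that a join of group actions induces an action on the join of spaces, and that the quotient of a join by a product acting factorwise is the join of the quotients.

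The step I expect to be the main (and only real) obstacle is justifying that the quotient metric on $X = S^{2n-1}/T^r$ is isometric, not merely homeomorphic, to the spherical join $X_1 \ast \cdots \ast X_l$ of the individual quotients $X_i = S^{2n_i - 1}/T^{r_i}$. One must check that the orbit space of a factorwise product action on a join carries exactly the spherical join metric of the orbit spaces of the factors. This follows because the join metric on $S^{2n-1}$ is determined by the factor metrics together with the join parameter, both torus factors preserve this structure, and orthogonal projection commutes with taking orbits; so a minimizing geodesic downstairs lifts to a minimizing geodesic of the join upstairs. I would verify this by noting that the distance in a spherical join is an explicit function of the distances within the factors and the angular join parameter, all of which descend to the quotient since each $T^{r_i}$ acts by isometries on its own factor and trivially on the others. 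Given the factorwise structure established above, the orbit map $S^{2n-1} \to X$ is exactly the join of the orbit maps $S^{2n_i - 1} \to X_i$, yielding the claimed isometry $X = X_1 \ast \cdots \ast X_l$.
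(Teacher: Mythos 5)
Your proposal is correct and follows essentially the same route as the paper: the paper's entire argument is the paragraph preceding the proposition, which uses Lemma \ref{matrix operations} to put $Z$ in block-diagonal form, splits $T^r = T^{r_1} \times \cdots \times T^{r_l}$ and $S^{2n-1} = S^{2n_1-1} \ast \cdots \ast S^{2n_l-1}$ so that each $T^{r_i}$ acts trivially on the other join factors, and declares the result clear. Your extra paragraph verifying that the quotient metric agrees with the spherical join metric just makes explicit what the paper leaves implicit.
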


\begin{theorem}
The following are equivalent.
\begin{enumerate}
   \item  \label{circ}
  $M_X$ is a direct sum of circuits.
  \item  \label{homeo. sphere}
  $X$ is homeomorphic to a sphere.
  \item  \label{homol. sphere}
  $X$ is an integral homology sphere.
  
\end{enumerate}
\end{theorem}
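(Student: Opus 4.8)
The plan is to establish the cycle of implications $(\ref{circ}) \Rightarrow (\ref{homeo. sphere}) \Rightarrow (\ref{homol. sphere}) \Rightarrow (\ref{circ})$, where the first implication is the geometric heart of the argument and the last two are comparatively routine. The implication $(\ref{homeo. sphere}) \Rightarrow (\ref{homol. sphere})$ is immediate, since any space homeomorphic to a sphere is in particular an integral homology sphere.

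For $(\ref{homol. sphere}) \Rightarrow (\ref{circ})$ I would argue via the Poincar\'e polynomial. By Theorem \ref{poincarepoly}, $\tilde{\PP}(X,t) = t^{r-1} T(M_X; 0, t^2)$, and all homology groups are free abelian. If $X$ is an integral homology sphere of dimension $2n-1-r$, then $\tilde{\PP}(X,t) = t^{2n-1-r}$, forcing $T(M_X; 0, t^2) = t^{2(n-r)}$, equivalently $T(M_X; 0, t) = t^{n-r}$. By Proposition \ref{tp-circuits} this holds if and only if $M_X$ is a direct sum of circuits, which is exactly (\ref{circ}).

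The main work is $(\ref{circ}) \Rightarrow (\ref{homeo. sphere})$. The plan is to reduce to the case of a single circuit using Proposition \ref{direct sum=join}: if $M_X = M_1 \oplus \cdots \oplus M_l$ with each $M_i$ a circuit, then $X = X_1 \ast \cdots \ast X_l$, and since the join of spheres is a sphere, it suffices to show that the quotient space of a single circuit is homeomorphic to a sphere. So suppose $M_X$ is a single circuit of rank $r$ on $n = r+1$ elements; here $X = S^{2r+1}/T^r$. I would first confirm via Theorem \ref{poincarepoly} that $X$ has the rational homology of a sphere of dimension $2n-1-r = r+1$, and by Theorem \ref{simply connected} that $X$ is simply connected (as $n \ge 2$). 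The strategy is then to produce a genuine homeomorphism, not just a homology equivalence. A natural approach is induction on $r$ using the mapping cone structure from Section \ref{integral homology}: pick an element $e_1$, which in a circuit is neither a loop nor a coloop, and write $X$ as the mapping cone of $g: N_x \to R_x$. Here $R_x = X_{M_X - e_1}$ and $M_X - e_1$ is a direct sum of coloops (since deleting any element of a circuit leaves an independent set), so $R_x$ is contractible, in fact a point or a cone collapsing appropriately; and $\hat{N}_x$ corresponds to $M_X/e_1$, which is again a circuit of one lower rank. The mapping cone of a map from a sphere to a point is the suspension, so one would want to show inductively that $N_x$ is a sphere and that $g$ collapses it in a way yielding a sphere one dimension up.

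The hard part will be controlling the homeomorphism type through the mapping cone rather than merely the homotopy or homology type, since a priori the mapping cone of a homotopy sphere need not be a manifold. I expect the cleanest route is to exploit the explicit linear structure: for a single circuit one can choose the representative matrix in a normalized form and analyze the quotient $S^{2r+1}/T^r$ directly as an iterated quotient, showing at each stage that the space of directions $N_x$ is a sphere and that the local cone structure is standard, so that $X$ is a topological manifold which, being a simply connected homology sphere, is homeomorphic to a sphere by the (generalized) Poincar\'e conjecture. Establishing that the cone points are genuinely manifold points, i.e. that $N_x$ is not merely a homology sphere but an honest sphere with $g$ a sufficiently tame map, is the delicate step; I would lean on the inductive hypothesis applied to the circuit $M_X/e_1$ together with the finite-group quotient analysis of Proposition \ref{Nx} to identify $N_x$ with $\hat{N}_x$ up to homeomorphism.
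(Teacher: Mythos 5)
Your overall skeleton is the paper's: the same cycle of implications, the same use of Theorem \ref{poincarepoly} together with Proposition \ref{tp-circuits} for (\ref{homol. sphere})$\Rightarrow$(\ref{circ}), the same reduction to a single circuit via Proposition \ref{direct sum=join}, and the same endgame (a simply connected topological manifold with the homology of a sphere is homeomorphic to a sphere). But there is a genuine gap at exactly the step you flag as delicate, and the tool you propose cannot close it. Proposition \ref{Nx} is a statement about \emph{homology} only --- it comes from Bredon's theorem on invariant cohomology classes plus the universal coefficient theorem --- and it cannot be upgraded to the homeomorphism $N_x \cong \hat{N}_x$ that your argument needs. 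Indeed that homeomorphism is false in general: for $Z=\begin{bmatrix} 3 & 1 & \cdots & 1 \end{bmatrix}$ the paper shows $N_x$ is a lens space, and this is precisely why that quotient fails to be a manifold. So any argument for the manifold property that uses the circuit hypothesis only through homological comparisons of $N_x$ with $\hat{N}_x$ cannot work; the circuit structure has to enter geometrically, at the level of the isotropy groups themselves.

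What the paper actually does at this point is the piece your outline is missing. Put $Z$ in circuit form (diagonal entries $a_1,\dots,a_{n-1}$, last column $b_1,\dots,b_{n-1}$, all nonzero) and take a point $x$ whose minimal supporting set $A_x$ is a proper subset of $M_X$, arranged so that $e_n \notin A_x$. Then the isotropy group splits as $T^r_x = \bigoplus_{i\in A_x}\Z_{a_i}\oplus\bigoplus_{i\notin A_x}T_i$, and --- this is the crucial consequence of the circuit form --- each finite cyclic factor $\Z_{a_i}$ acts trivially on every circle of the normal sphere $S^{M_X-A_x}$ except $S_n$, on which it acts by rotation. Quotienting a join $Y \ast S^1$ by a finite group of rotations of the $S^1$ factor returns $Y \ast S^1$, so dividing out the finite part of $T^r_x$ yields again a sphere with the same join decomposition, and the residual torus action is represented by a matrix of the same circuit form of smaller size. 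This sets up an induction on $n$ (base case $n=2$, handled in Proposition \ref{CPn}) showing that every space of directions $N_x/T^r_x$ is an honest sphere, hence that $X$ is a manifold. Your instinct to abandon the mapping-cone induction was correct --- it controls only homotopy type --- but the replacement requires this explicit rotation analysis of the finite isotropy, not Proposition \ref{Nx}.
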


\begin{proof}
Obviously (\ref{homeo. sphere}) implies (\ref{homol. sphere}).  The implication (\ref{homol. sphere}) implies (\ref{circ}) follows immediately from Proposition \ref{tp-circuits} and our formula for the homology of $X.$  So it remains to prove (\ref{circ}) implies (\ref{homeo. sphere}).   Our first simplification is to observe that since joins of spheres are spheres, Proposition \ref{direct sum=join} shows that it is sufficient to prove that if $M_X$ is a circuit, then $X$ is homeomorphic to a sphere.   If $n=1,$ then $r=0$ and $X = S^1.$ So from here on we assume that $r+1=n \ge 2$ and $M_X$ is a circuit.  This implies that $Z$ can be row reduced to something of the form
$$\begin{bmatrix} a_1 & 0 & 0 &\dots & 0 & b_1 \\
0 & a_2 & 0 & \dots & 0 & b_2\\
\vdots & \vdots & \vdots & \vdots & \vdots & \vdots \\
0 & 0 & 0 & \dots & a_{n-1} & b_{n-1} \end{bmatrix}$$
with all $a_i$ and $b_i$ nonzero. 

Our strategy here is simple:  prove that $X$ is a simply connected manifold with the homology of a sphere.  We have already seen that $X$ is simply connected (Theorem \ref{simply connected}) and that it has the homology of a sphere.  So it remains to show that $X$ is a manifold.  We will do this by induction on $n.$  The base case $n=2$ was discussed in the proof of Proposition \ref{CPn}. 

Let $\bar{x} \in X$ and $x$ be any preimage  of $\bar{x}$ in $S^{2n-1}.$  Now let $N_x$ be the unit tangent vectors in the tangent space of $x$ which are orthogonal to $T^r x,$ the orbit of $x.$  Since small metric neighborhood of $\bar{x}$ are homeomorphic to a cone over $N_x/T^r_x,$ it is sufficient to prove that this quotient space is homeomorphic to $S^{n-1}.$  

As before, for $x \in S^{2n-1}$ let $A_x$ be the  minimal nonempty subset of $M_X$ such that $x \in S^{A_x}.$ If $A_x= M_X,$ then $x$ is in the principle isotropy group of the torus and $\bar{x}$ is a manifold point.  So we can assume that $A_x \neq M_X.$  For notational convenience, we can also assume that $e_n \notin A_x$ by reordering the columns if necessary.  Since $T^r \cdot S^{A_x} \subseteq S^{A_x}, \  T^r x \subseteq S^{A_x}.$ Define three subspaces of the tangent space of $x$ as follows:  $\mathcal{T}$ are the vectors tangent to $T^r x, \ \mathcal{O}$ are vectors tangent to $S^{A_x},$ but orthogonal to $T^r x,$ and $\mathcal{N}$ are those orthogonal to $S^{A_x}$, and thus also orthogonal to $T^r_x.$    Then the tangent space at $x$ is  $\mathcal{T} \oplus \mathcal{O} \oplus \mathcal{N}.$ In terms of this decomposition, $N_x$ are the unit vectors in $\mathcal{O} \oplus \mathcal{N}.$  The form of $Z$ implies that the rank $T^r_x$ is $|A_x|,$ unless $A_x=M_X,$ in which case it is only $n-1=r.$  By construction $T^r_x$ acts trivially on $\mathcal{O}.$ Thus we have reduced the problem to showing that $\tilde{N}_x = S^{M_X-A_x}/T^r_x$ is homeomorphic to a sphere, where $\tilde{N}$ are the unit vectors in $\mathcal{N}.$  The action of $T^r_x$ on $S^{M_X-A_x}$ is the induced action of $T^r$.  This is most easily seen by representing the unit vectors by minimal geodesics beginning at $x$ and ending in $S^{M_X-A_x}.$  

As we have seen, $T^r_x$ is a direct sum of a finite group and a torus.

$$T^r_x = \bigoplus_{i \in A_X} \Z_{a_i} \oplus \bigoplus_{i \notin A_x} T_i.$$
First we quotient out by the finite left-hand summand.  The action of each cyclic subgroup in this summand on $S^{2n-1}$ is trivial except for $S_n$ where it acts by rotation by $2\pi/ a_i.$ Hence, after quotienting out by the finite left-hand summand we are left with $S^{2n-1}$ with the same join decomposition as before (except $S_n$ is smaller)  and an action of $\tilde{T}^{n-|A_x|}$ whose associated matrix is the same form as before, but of smaller size and all of the $a_i=1.$  Finally, we can apply the inductive hypothesis to see that this quotient space is homeomorphic to a sphere. 

\end{proof}
\bibliographystyle{amsplain}
\bibliography{Bibliography2}

\end{document}